\numberwithin{equation}{section}
\newcommand{\beq}{\begin{equation}}
\newcommand{\enq}{\end{equation}}
\newtheorem{Theorem}{Theorem}[section]
\newtheorem{Lemma}[Theorem]{Lemma}
\newtheorem{Proposition}[Theorem]{Proposition}
\newtheorem{Remark}[Theorem]{Remark}
\newcommand{\benu}{\begin{enumerate}}
\newcommand{\beqa}{\begin{eqnarray}}
\newcommand{\beqan}{\begin{eqnarray*}}
\newcommand{\eay}{\end{array}}
\newcommand{\edm}{\end{displaymath}}
\newcommand{\eenu}{\end{enumerate}}
\newcommand{\eeq}{\end{equation}}
\newcommand{\eeqa}{\end{eqnarray}}
\newcommand{\eeqan}{\end{eqnarray*}}
\newcommand{\br}{\begin{Remark}}
\newcommand{\er}{\end{Remark}}
\newcommand{\bqa}{\begin{eqnarray}}
\newcommand{\eqa}{\end{eqnarray}}
\newcommand{\bqw}{\begin{eqnarray*}}
\newcommand{\eqw}{\end{eqnarray*}}
\newcommand{\bea}{\begin{array}{cc}}
\newcommand{\ena}{\end{array}}
\begin{document}
\begin{center}
{\large \bf Local well-posedness    of solutions to 2D magnetic Prandtl model in the Prandtl-Hartmann regine}\\
\vspace{0.25in} ${\rm \rm Yuming\; Qin^{1}}\quad{\rm Xiuqing\; Wang^{2 * }}   \quad{\rm Junchen \; Liu^{2  }} $\\
\vspace{0.15in} 1. Department of Mathematics, Institute for Nonlinear Sciences,
Donghua University,\\
Shanghai 201620, P. R. China.\\
E-mail: yuming\_qin@hotmail.com\\

2. Department of Mathematics, Kunming University of Science and Technology, \\ Kunming 650500,
Yunnan, China.\\
E-mails:  daqingwang@kust.edu.cn;  liujunchen@stu.kust.edu.cn  \\

 \vspace{3mm}
\end{center}

\begin{abstract}
In this paper, we prove local existence and uniqueness for the 2D Prandtl-Hartmann regine in weighted Sobolev spaces. Our proof
is based on using uniform estimates of the regularized parabolic equation and maximal principle under the Oleinik's monotonicity assumption. Compared with Nash-Moser iteration in \cite{[5]}, the data do not require high regularity.

{\bf Key words:} Prandtl-Hartmann regine, boundary layer, local well-posedness.   \\

AMS Subject Classification: 76N10, 76N15, 35M13, 35Q35, 53C35
\end{abstract}

\section{Introduction}
The system of magnetohydrodynamics is a fundamental system to describe the fluid under the influence of electromagnetic field. The following mixed Prandtl and Hartmann boundary layer equations arise from the incompressible MHD system when the physical parameters such as Reynolds number, magnetic Reynolds number and the Hartmann number satisfy some constraints in the high Reynolds numbers limit, which were proposed in \cite{[31],[4]}.

In this paper, we consider the  Prandtl-Hartmann regine equations in the domain $\{(t,x,y)\big|t>0, x\in \mathbb{T}, y\in\mathbb{R_{+}}\}$:
\begin{equation}\left\{
\begin{array}{ll}
\partial _{t}u+u\partial _{x}u+v\partial _{y}u= \partial _{y}b+\partial _{y}^{2}u-\partial _{x}P,\\
\partial _{y}u+\partial _{y}^{2}b=0, \\
\partial _{x}u+\partial _{y}v=0,\\
\lim\limits_{y\rightarrow+\infty}u(t,x,y)=U(t,x),\quad  \lim\limits_{y\rightarrow+\infty}b(t,x,y)=B(t,x).
\end{array}
 \label{1.1}         \right.\end{equation}
Here
$(u,v)=(u(t,x,y),v(t,x,y))$ denotes the velocity field, $b(t,x,y)$ is the corresponding tangential magnetic component.  The given scalar pressure $P:=P(x,t)$ and the outer flow $U$ satisfy
the well-known Bernoulli's law:
\begin{eqnarray}
\partial_{t}U+U\partial_{x}U+\partial_{x}P=0 .
\end{eqnarray}
The initial data and no-slip boundary condition are imposed by
\begin{equation}\left\{
\begin{array}{ll}
u(0,x,y)=u_{0}(x,y),\\
u(t,x,0)=0,\ \ v(t,x,0)=0.
\end{array}
 \label{1.2}         \right.\end{equation}

Noticing $(\ref{1.1})_{4}$, and integrating $(\ref{1.1})_{2}$ in $y$ over $[y, +\infty)$, we obtain
\begin{eqnarray}
\partial_{y}b=U-u . \label{1.3}
\end{eqnarray}
Inserting $(\ref{1.3})$ into $(\ref{1.1})$, then we arrive at the following equations
\begin{equation}\left\{
\begin{array}{ll}
\partial _{t}u+u\partial _{x}u+v\partial _{y}u=(U-u) +\partial _{y}^{2}u-\partial _{x}P,\\
\partial _{x}u+\partial _{y}v=0.
\end{array}
 \label{1.4}         \right.\end{equation}
Let the vorticity $w=\partial_{y}u$, then equations $(\ref{1.4})$ reduce to the following vorticity system
\begin{equation}\left\{
\begin{array}{ll}
\partial _{t}w+u\partial _{x}w+v\partial _{y}w=-w+\partial _{y}^{2}w,\\
w(0,x,y)=\partial _{y} u_{0},\\
\partial _{y}w|_{y=0}=\partial _{x}P-U.
\end{array}
 \label{1.5}         \right.\end{equation}

In this paper, we consider system $(\ref{1.1})$ under the Oleinik's monotonicity assumption $w=\partial_{y}u>0$. 
Now let us briefly review the background and corresponding results about the boundary layer. Actually, Prandtl proposed the basic rules for describing a phenomenon at the Heidelberg international mathematical conference in 1904. He pointed out that there are two regions in the flow of a solid: a thin layer near an object, viscous friction plays an important role; outside this thin layer, friction is negligible.  Prandtl called this thin layer as the boundary layer, which can be described by the so-called Prandtl equations. Later, the researches on the boundary layer were appeared, such as \cite{[29],[30]}. But until 1999, Oleinik and Samokhin \cite{[26]} gave a systematic study of the Prandtl equations from a mathematical point of view, which is the fundamental research on boundary layer system.  The Prandtl system is obtained as  a simplification of the Navier-Stokes system and describes the motion of a fluid with small viscosity about a solid body in a thin layer which is formed near its surface owing to the adhesion of the viscous fluid to the solid surface.

The first well-known result was developed by Oleinik and Samokhin in \cite{[26]}, where under the monotonicity condition on tangential velocity with respect to the normal variable to the boundary, the local (in time) well-posedness of Prandtl equations was obtained by using the Crocco transformation and von Mises transformation.  Since then, Crocco transformation and von Mises transformation have been used in boundary layer problems. For example, Xin and Zhang \cite{[27]} established a global existence of weak solutions to the two-dimensional Prandtl system for the pressure is favourable, which generalized the local well-posedness results of Oleinik \cite{[26]}. Gong, Guo and Wang \cite{[7]} investigated the local spatial existence of solution for the compressible Navier-Stokes boundary layer equations.
Coordinate transformation was also used to MHD boundary layer in \cite{[20]}. The authors in \cite{[5],[17],[18],[8],[10],[9]} studied the well-posedness of the Prandtl equations around a shear flow,  i.e., let $u(t,x,y)=u^{s}(t,y)+\widetilde{u}(t,x,y)$, and shear flow $u^{s}(t,y)$ is a solution of   heat equations
\begin{equation*}\left\{
\begin{array}{ll}
\partial_{t}u^{s}-\partial_{y}^{2}u^{s}=0,\\
u^{s}|_{y=0}=0, \ \ \lim\limits_{y\rightarrow+\infty}u^{s}(t,y)=1,\\
u^{s}|_{t=0}=u^{s}_{0}(y).
\end{array}
 \label{ }         \right.\end{equation*}
Xie and Yang in \cite{[19]} also obtained the local existence of solutions to the MHD boundary layer system as a general shear flow.
G$\acute{e}$ratd-Varet in \cite{[21],[34]} showed the local in time well-posedness of the 2D Prandtl equations for data with Gevery regularity.
Masmoudi and Wong \cite{[24],[2]} obtained the local existence and uniqueness for the two-dimensional Prandtl system in weighted Sobolev spaces under Oleinik's monotonicity assumption by using pure energy method which based on a cancellation property. Thereafter, Xie and Yang \cite{[13],[14]}, Gao, Huang and Yao \cite{[15]} studied the local well-posedness of MHD boundary layer problems by the energy method. Recently, the first author and Dong \cite{[39]} established the local well-posedness of solutions in $H^s$ with $1\leq s\leq 4\; U\equiv constant$ and without monotonicity condition and a lower bound. While in this paper, we prove the local well-posedness of solutions in $H^s$ to 2D magnetic Prandtl equations in Prandtl-Hartmann regine  with $s\geq 4$, a function $U(t,x)$ and different homogeneous equations from \cite{[39]}.

There are also many references about the ill-posedness of Prandtl equations, e.g., see \cite{[31],[32],[35],[23]}. For more  references, we refer to \cite{[28],[36],[11],[16],[6],[25]}.

From above statement, we know that it is hard to prove the local existence of solutions by the energy method.
Our purpose of this paper is to prove the local existence of solutions of MHD boundary layer in the Prandtl-Hartmann regine, whose proof  is based on  a nonlinear energy estimate.

We introduce the weighted sobolev space and define the space $H^{s,\gamma}_{\sigma,\delta}$ by
$$H^{s,\gamma}_{\sigma,\delta}:=\left\{w:\mathbb{T}\times\mathbb{R_{+}}\rightarrow\mathbb{R}:
\|w\|_{H^{s,\gamma}}<+\infty, (1+y)^{\sigma}|w|\geq\delta,
\sum \limits_{|\alpha|\leq 2}  |(1+y)^{\sigma+\alpha_{2}}D^{\alpha}w | ^{2}\leq\frac{1}{\delta^{2}} \right\},$$
where $D^{\alpha}:=\partial_{x}^{\alpha_{1}}\partial_{y}^{\alpha_{2}}$, $\alpha_{1}+\alpha_{2}=s$,  $s\geq 4$, $\gamma\geq 1$, $\sigma>\gamma+\frac{1}{2}$ and $\delta\in (0,1)$. We define the norm as
 $$\|w\|_{H^{s,\gamma}}^{2}:=\sum \limits_{|\alpha|\leq s} \|(1+y)^{\gamma+\alpha_{2}}D^{\alpha}w\|_{L^{2}}^{2}$$
and
$$\|w\|_{H^{s,\gamma}_{g}}^{2}:=\|(1+y)^{\gamma}g_{s}\|^{2}_{L^{2}}+
\sum\limits_{\substack{ |\alpha|\leq s\\ \alpha_{1}\leq s-1}}  \|(1+y)^{\gamma+\alpha_{2}}D^{\alpha}w\|_{L^{2}}^{2},$$
here
$$g_{s}:=\partial_{x}^{s}w-\frac{\partial_{y}w}{w}\partial_{x}^{s}(u-U).$$
In this paper, for convenience, we simply write
$$\iint\cdot:=\int_{\mathbb{T}}\int_{\mathbb{R_{+}}}\cdot dxdy.$$

The major difficulty for the study of the  Prandtl-Hartmann regine equations (\ref{1.1}) is  the vertical velocity $v=-\partial_y^{-1} \partial_x u$, which leads to a regularity loss of $x$-derivative. Thus, we introduce a weighted norm $\|w\|_{H^{s,\gamma}_{g}}^{2}$ (it was developed in work \cite{[2]}) for the vorticity $w$ which avoids the loss of $x$-derivative by the nonlinear cancellation.  Moreover, the lack of higher-order boundary conditions at $y=0$ prevents us from applying the integration by parts in the $y$-variable. An estimate of boundary data for the higher-order boundary conditions can help us overcome this technical difficulty, see Lemma \ref{y2.1}. Note that the equations studied in this paper are different from those in \cite{[2]}. It worth pointing out here that an innovation point in this paper is that we do not use the classical Crocco transformation and von Mises transformation introduced in \cite{[26]}, but only under the assumption on monotonicity. Firstly, to prove the local existence of solutions, we will construct an approximate scheme and study the parabolic regularized Prandtl equation (\ref{2.0001}), which preserves the nonlinear structure of the  equation (\ref{1.1}), as well as the nonlinear cancellation properties.

The article is structured as follows. Section 1 is the introduction, which contains many inequalities used in this paper. In Section 2, we give the uniform estimates with the weighted norm, the difficulty of this part is the estimation on the boundary at $y=0$, and we give the regular pattern at $y=0$ eventually, see Lemmas \ref{y2.1}-\ref{y2.2}.
In Section 3, we prove local existence and uniqueness of solutions to the Prandtl system.

Last, we state our main result as follows.
\begin{Theorem}\label{t1.1}
Given any even integer  $s \geq 4$, and real numbers $\gamma, \sigma, \delta$ satisfying $\gamma\geq 1$, $\sigma>\gamma+\frac{1}{2}$ and $\delta\in (0,1)$.   Assume the following conditions hold,

(i) suppose that the initial data $u_0-U(0,x) \in H^{s, \gamma-1}$ and $\partial_y u_0 \in H^{s,\gamma}_{\sigma, 2\delta}$ satisfy the compatibility conditions $u_0|_{y=0}$ and $\lim \limits_{y \rightarrow +\infty} u=U|_{t=0}$. In addition, when $s = 4$,   assume  that $\delta \geq 0$ is chosen small enough such that $\|\omega_0\|_{H^{s,\gamma} } \leq C \delta^{-1}$ with a generic constant $C$.

(ii) the outer flow $U$  satisfies
 \begin{eqnarray}
\sup \limits_{t} \sum\limits_{l=0}^{\frac{s}{2}+1}\|\partial_t^l U\|_{H^{s-2l+2}(\mathbb{T})} < + \infty.
\label{1.8}
\end{eqnarray}

Then there exists a time $T := T(s, \gamma, \sigma, \delta,  \|w_0\|_{H^{s,\gamma} },U)$ such that the initial-boundary value problem
(\ref{1.1})-(\ref{1.2}) has a unique local classical solution $(u, v, b)$ satisfying
$$u-U \in L^{\infty}([0,T];H^{s,\gamma-1}) \cap C([0,T]; H^{s}-w)$$
and
$$ \partial_y u \in L^{\infty}([0,T];H^{s,\gamma}_{\sigma, \delta}) \cap C([0,T]; H^{s}-w),$$
where $H^s-w$ is the space $H^s$ endowed with its weak topology.
\end{Theorem}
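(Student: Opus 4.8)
The plan is to prove Theorem~\ref{t1.1} by a vanishing-viscosity regularization combined with the weighted energy estimates advertised in Lemmas~\ref{y2.1}--\ref{y2.2}. First I would introduce the parabolic regularized system~(\ref{2.0001}): one adds $\varepsilon \partial_x^2 u$ (or $\varepsilon\partial_x^2 w$ at the vorticity level) to the right-hand side of~(\ref{1.4})--(\ref{1.5}), keeping the quasilinear transport structure $\partial_t w + u\partial_x w + v\partial_y w = -w + \partial_y^2 w + \varepsilon \partial_x^2 w$ and the relation $v = -\partial_y^{-1}\partial_x u$ intact, together with a suitable regularization of the initial data. For fixed $\varepsilon>0$ this is a (nonlinear, nonlocal in $v$) parabolic problem, and one obtains short-time existence of a smooth solution $(u^\varepsilon, v^\varepsilon, b^\varepsilon)$ on a possibly $\varepsilon$-dependent interval by a standard fixed-point/continuation argument; crucially one checks that the Oleinik monotonicity $w^\varepsilon = \partial_y u^\varepsilon > 0$ is propagated by the maximum principle applied to the vorticity equation (the zeroth-order term $-w$ and the boundary condition $\partial_y w|_{y=0} = \partial_x P - U$ are compatible with this), and similarly that the lower bound $(1+y)^\sigma w^\varepsilon \ge \delta$ and the upper bounds defining $H^{s,\gamma}_{\sigma,\delta}$ survive on a short time interval. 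This is what lets us work in the space $H^{s,\gamma}_g$ and use $g_s = \partial_x^s w - \tfrac{\partial_y w}{w}\partial_x^s(u-U)$ as the good unknown.

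Next, and this is the heart of the matter, I would derive the \emph{uniform-in-$\varepsilon$} a priori estimate: there is a time $T$ depending only on $s,\gamma,\sigma,\delta, \|w_0\|_{H^{s,\gamma}}$ and the norm of $U$ in~(\ref{1.8}), and \emph{not} on $\varepsilon$, such that $\|u^\varepsilon - U\|_{H^{s,\gamma-1}} + \|w^\varepsilon\|_{H^{s,\gamma}_g}$ stays bounded on $[0,T]$. The mechanism is the nonlinear cancellation built into $g_s$: differentiating the vorticity equation $s$ times in $x$ produces the dangerous term $v\,\partial_y \partial_x^s w$, i.e. a loss of one $x$-derivative through $v = -\partial_y^{-1}\partial_x u$; but the term $-\tfrac{\partial_y w}{w}\partial_x^s(u-U)$ in $g_s$ is tailored so that the equation satisfied by $g_s$ has these worst terms cancel, leaving only commutators that are controlled by $\|w\|_{H^{s,\gamma}_g}$ itself. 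One then runs a weighted $L^2$ energy estimate on $g_s$ and on the lower tangential derivatives $D^\alpha w$ with $\alpha_1 \le s-1$ (for which no cancellation is needed, the loss-of-derivative term being absent), using the weights $(1+y)^{\gamma+\alpha_2}$; the decay condition $\sigma > \gamma + \tfrac12$ is exactly what makes $\partial_y w / w$ and its derivatives bounded with the right weights so the commutators close. Integration by parts in $y$ on the top-order term forces us to control the boundary values of $w$ and its $y$-derivatives at $y=0$, which is precisely the content of Lemma~\ref{y2.1} (the ``regular pattern at $y=0$'': one reads off $\partial_y w|_{y=0}$, $\partial_y^2 w|_{y=0}$, $\ldots$ recursively from the equation restricted to the boundary). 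Assembling all pieces gives a differential inequality $\tfrac{d}{dt}\mathcal{E}^\varepsilon \le C(1 + \mathcal{E}^\varepsilon)^{N}$ with $C,N$ uniform in $\varepsilon$, whence a uniform life span $T$ and uniform bounds; the special smallness hypothesis on $\delta$ when $s=4$ is used here to absorb the lowest-regularity borderline terms. The $\varepsilon\partial_x^2$ term contributes a good sign $-\varepsilon\|\partial_x^{s+1}(\cdots)\|^2 \le 0$ and is simply discarded.

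Finally I would pass to the limit $\varepsilon \to 0$. The uniform bounds give, along a subsequence, $u^\varepsilon - U \rightharpoonup u - U$ weakly-$*$ in $L^\infty([0,T];H^{s,\gamma-1})$ and $w^\varepsilon \rightharpoonup w$ weakly-$*$ in $L^\infty([0,T];H^{s,\gamma}_{\sigma,\delta})$, and an Aubin--Lions argument (the equation gives uniform bounds on $\partial_t u^\varepsilon$ in a lower-order space) yields strong convergence in a space compactly embedded enough to pass to the limit in the nonlinear terms $u^\varepsilon\partial_x u^\varepsilon$, $v^\varepsilon\partial_y u^\varepsilon$; the pointwise bounds $(1+y)^\sigma w \ge \delta$ and the $H^{s,\gamma}_{\sigma,\delta}$-bounds are preserved under these limits because they are closed conditions. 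Weak continuity in time, i.e. $u - U \in C([0,T]; H^s\text{-}w)$, follows from the bound plus continuity into a weak-lower-order space in the standard way; then $b$ is recovered from $\partial_y b = U - u$ and the boundary condition, and regularity/classicality of $(u,v,b)$ follows from the Sobolev index $s \ge 4$ by embedding. Uniqueness is proved separately in Section~3 by a weighted energy estimate on the difference of two solutions, again using the cancellation structure (one estimates the difference in a norm one derivative lower than the existence norm, so no loss occurs). The main obstacle, and where essentially all the work lies, is the uniform top-order estimate: correctly identifying the equation for $g_s$, verifying the cancellation of the $x$-derivative-loss term, and controlling every commutator and every $y=0$ boundary contribution with weights sharp enough that $\sigma > \gamma + \tfrac12$ suffices.
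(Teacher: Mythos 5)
Your proposal follows essentially the same route as the paper: parabolic regularization in $x$, uniform-in-$\epsilon$ weighted energy estimates built on the good unknown $g_{s}$ and its cancellation of the $\partial_x^{s}v$ loss, the boundary-reduction lemma at $y=0$, maximum-principle arguments for the weighted $L^{\infty}$ bounds and the lower bound $(1+y)^{\sigma}w\ge\delta$, then Aubin--Lions compactness to pass $\epsilon\to 0$, recovery of $b$ from $\partial_y b=U-u$, and uniqueness via an $L^{2}$ estimate on the lower-order difference quantity $\bar g=\bar w-\tfrac{\partial_y w_2}{w_2}\bar u$. This matches the paper's proof in structure and in all key mechanisms, so no further comparison is needed.
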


\subsection{Preliminaries}
In this subsection, we will give some lemmas to our main result.
\begin{Lemma}(\cite{[2]})\label{p4.1}
Let $s\geq 4$ be an integer, $\gamma\geq 1$, $\sigma>\gamma+\frac{1}{2}$ and $\delta\in (0,1)$. Then for any $w\in H^{s,\gamma}_{\sigma,\delta}(\mathbb{T}\times\mathbb{R}_{+})$,
we have the following inequality
 \begin{eqnarray}
C_{\delta }\|w\|_{H^{s,\gamma}_{g}}\leq  \|w\|_{H^{s,\gamma} }+ \|u-U\|_{H^{s,\gamma-1} } \leq C_{s, \gamma,\sigma,\delta  }\left(\|w\|_{H^{s,\gamma}_{g}}+\|\partial_{x}^{s}U\|_{{L^2}(\mathbb{T})}\right)
  ,
\label{4.1}
\end{eqnarray}
where $C_{s, \gamma,\sigma,\delta  }>0$ is a constant and only depends on $s, \gamma,\sigma,\delta  $.
\end{Lemma}

\begin{Lemma}\label{y4.1}
Let $f:\mathbb{T} \times\mathbb{R}_{+}\rightarrow \mathbb{R}$, \\
(i)  if $\lambda>-\frac{1}{2}$  and $ \lim \limits_{y\rightarrow +\infty }f(x,y ) =0$,  then
 \begin{eqnarray}
 \|(1+y)^{\lambda}f\|_{L^{2}(\mathbb{T} \times\mathbb{R}_{+})} \leq 
\frac{2}{2\lambda+1} \|(1+y)^{\lambda+1}\partial_{y}f\|_{L^{2}(\mathbb{T} \times\mathbb{R}_{+})};
\label{4.10}
\end{eqnarray}
(ii)    if $\lambda<-\frac{1}{2}$  and $ f(x,y )|_{y=0}=0$,  then
 \begin{eqnarray}
 \|(1+y)^{\lambda}f\|_{L^{2}(\mathbb{T} \times\mathbb{R}_{+})} \leq 
 -\frac{2}{2\lambda+1} \|(1+y)^{\lambda+1}\partial_{y}f\|_{L^{2}(\mathbb{T} \times\mathbb{R}_{+})}.
\label{4.11}
\end{eqnarray}
\end{Lemma}
\begin{proof}
The proof is elementary. Note that the term
\begin{eqnarray*}
&&\|(1+y)^{\lambda}f\|_{L^{2}(\mathbb{T} \times\mathbb{R}_{+})} ^{2}
=\frac{1}{2\lambda+1}\int_{\mathbb{T}}\int_{\mathbb{R}_{+}}f^{2}( x,y)d(1+y)^{2\lambda+1} dx  \nonumber \\
&&=\frac{1}{2\lambda+1}\int_{\mathbb{T}}(1+y)^{2\lambda+1}f^{2}( x,y) \big|_{y=0}^{y=+\infty}dx
-\frac{2}{2\lambda+1} \int_{\mathbb{T}}\int_{\mathbb{R}_{+}} (1+y)^{2\lambda+1}f ( x,y) \partial_{y}f(x,y)dxdy \nonumber \\
&& \leq \left|-\frac{2}{2\lambda+1}\right| \|(1+y)^{\lambda }  f\|_{L^{2}(\mathbb{T} \times\mathbb{R}_{+})}  \|(1+y)^{\lambda+1}\partial_{y}f\|_{L^{2}(\mathbb{T} \times\mathbb{R}_{+})},
 \end{eqnarray*}
which gives $(\ref{4.10})$-$(\ref{4.11})$.
\end{proof}

\begin{Lemma}(\cite{[2]})\label{y4.2}
Let $s\geq 4$ be an integer, $\gamma\geq 1$, $\sigma>\gamma+\frac{1}{2}$ and $\delta\in (0,1)$. For any $w\in H^{s,\gamma}_{\sigma,\delta}(\mathbb{T}\times\mathbb{R}_{+})$, then  for $k=0,1,2,\cdots, s$,
we have
 \begin{eqnarray}
 &&\|(1+y)^{\gamma} g_{k}\|_{L^{2}}\leq\|(1+y)^{\gamma}\partial _{x}^{k}w\|_{L^{2}}+\delta^{-2}\|(1+y)^{\gamma-1}\partial _{x}^{k}(u-U)\|_{L^{2}}  ,
 \label{4.5}
\end{eqnarray}
and
 \begin{eqnarray}
 &&\|(1+y)^{\gamma}\partial _{x}^{k}w\|_{L^{2}}+\|(1+y)^{\gamma-1}\partial _{x}^{k}(u-U)\|_{L^{2}}\leq C_{\gamma,\sigma,\delta} \left(\|\partial_{x}^{k}U\|_{{L^2}(\mathbb{T})}+\|(1+y)^{\gamma }g_{k}\|_{L^{2}}\right),\label{4.4}
\end{eqnarray}
where $C_{s, \gamma,\sigma,\delta  }>0$ is a constant and only depends on $s, \gamma,\sigma,\delta  $.\\
%
\end{Lemma}

\begin{Lemma}(\cite{[2]})\label{y4.3}
Let $f:\mathbb{T} \times\mathbb{R}_{+}\rightarrow \mathbb{R}$.  Then there exists a constant $C>0$ such that
 \begin{eqnarray}
 \|f(x,y )\|_{L^{\infty}(\mathbb{T} \times \mathbb{R}_{+})}    \leq C
\left(\|f(x,y)\|_{L^{2}(\mathbb{T}\times \mathbb{R}_{+} )}   +\|\partial_{x}f(x,y)\|_{L^{2}(\mathbb{T}\times \mathbb{R}_{+} )}  +\|\partial_{y }^{2}f(x,y)\|_{L^{2}(\mathbb{T}\times \mathbb{R}_{+} )} \right).
\label{4.12}
\end{eqnarray}

\end{Lemma}

\begin{Lemma}(\cite{[2]})\label{y4.4}
Let $s\geq 4$ is an integer, $\gamma\geq 1$, $\sigma>\gamma+\frac{1}{2}$ and $\delta\in (0,1)$. Then for any $w\in H^{s,\gamma}_{\sigma,\delta}(\mathbb{T}\times\mathbb{R}_{+})$,
we have the following inequalities: \\
$(i)$ for $k=0, 1, 2,  \cdots, s-1 $,
\begin{eqnarray}
 \| \frac{\partial_{x}^{k} v +y\partial_x^{k+1}}{1+y}  \|_{L^{2}} \leq C_{s, \gamma,\sigma,\delta  } (\|w\|_{H^{s,\gamma}_{g}}+\|\partial_{x}^{s}U\|_{{L^2}(\mathbb{T})});
\label{4.13}
\end{eqnarray}
(ii) for $k=0, 1, 2,  \cdots, s $,
\begin{eqnarray}
 \|  (1+y)^{\gamma-1}\partial_x^{k}(u-U) \|_{L^{2}} \leq C_{s, \gamma,\sigma,\delta  } (\|w\|_{H^{s,\gamma}_{g}}+\|\partial_{x}^{s}U\|_{{L^2}(\mathbb{T})});
\label{4.14}
\end{eqnarray}
(iii) for $k=0, 1, 2,  \cdots, s-2 $,
\begin{eqnarray}
 \|  \frac{\partial_{x}^{k} v }{1+y}  \|_{L^{\infty}} \leq C_{s, \gamma,\sigma,\delta  } (\|w\|_{H^{s,\gamma}_{g}}+\|\partial_{x}^{s}U\|_{{L^2}(\mathbb{T})});
\label{4.15}
\end{eqnarray}
(iv) for $k=0, 1, 2,  \cdots, s-1 $,
\begin{eqnarray}
 \|  \partial_{x}^{k}u  \|_{L^{\infty}} \leq C_{s, \gamma,\sigma,\delta  }  (\|w\|_{H^{s,\gamma}_{g}}+\|\partial_{x}^{s}U\|_{{L^2}(\mathbb{T})});
\label{4.16}
\end{eqnarray}
(v) for $k=0, 1, 2,  \cdots, s-2$,
\begin{eqnarray}
 \|  (1+y)^{\gamma+k_{\alpha_{2}} }D^{k  }w  \|_{L^{\infty}} \leq C_{s, \gamma,\sigma,\delta  } \|w\|_{H^{s,\gamma}_{g}};
\label{4.17}
\end{eqnarray}
(vi) for all $|\alpha| \leq s$,
\begin{eqnarray}
\|(1+y)^{\gamma+\alpha_2}D^{\alpha} w\|_{L^2} \leq
\left\{
    \begin{array}{ll}
        C_{s, \gamma,\sigma,\delta  }  (\|w\|_{H^{s,\gamma}_{g}}+\|\partial_{x}^{s}U\|_{{L^2}(\mathbb{T})}) & if~ \alpha=(s,0), \\
       \|w\|_{H^{s,\gamma}_{g}} & if ~ \alpha \neq (s,0);
    \end{array}
\right.\label{4.52}
\end{eqnarray}
(vii) for all $k=0, 1, 2,  \cdots, s$,
\begin{eqnarray}
\|(1+y)^{\gamma}g_k\|_{L^2(\mathbb{T})} \leq
\left\{
    \begin{array}{ll}
        C_{s, \gamma,\sigma,\delta  }  (\|w\|_{H^{s,\gamma}_{g}}+\|\partial_{x}^{s}U\|_{{L^2}(\mathbb{T})}) & if~ k=0, 1, 2,  \cdots, s-1, \\
       \|w\|_{H^{s,\gamma}_{g}} & if ~ k=s,
    \end{array}
\right.\label{4.51}
\end{eqnarray}
where $C_{s, \gamma,\sigma,\delta  }$ is a constant and only depends on $s, \gamma,\sigma,\delta  $.
\end{Lemma}
\subsection{Preliminaries}
In this subsection, we will give some lemmas to our main result.
\begin{Lemma}(\cite{[2]})\label{p4.1}
Let $s\geq 4$ be an integer, $\gamma\geq 1$, $\sigma>\gamma+\frac{1}{2}$ and $\delta\in (0,1)$. Then for any $w\in H^{s,\gamma}_{\sigma,\delta}(\mathbb{T}\times\mathbb{R}_{+})$,
we have the following inequality
 \begin{eqnarray}
C_{\delta }\|w\|_{H^{s,\gamma}_{g}}\leq  \|w\|_{H^{s,\gamma} }+ \|u-U\|_{H^{s,\gamma-1} } \leq C_{s, \gamma,\sigma,\delta  }\left(\|w\|_{H^{s,\gamma}_{g}}+\|\partial_{x}^{s}U\|_{{L^2}(\mathbb{T})}\right)
  ,
\label{4.1}
\end{eqnarray}
where $C_{s, \gamma,\sigma,\delta  }>0$ is a constant and only depends on $s, \gamma,\sigma,\delta  $.
\end{Lemma}

\begin{Lemma}\label{y4.1}
Let $f:\mathbb{T} \times\mathbb{R}_{+}\rightarrow \mathbb{R}$, \\
(i)  if $\lambda>-\frac{1}{2}$  and $ \lim \limits_{y\rightarrow +\infty }f(x,y ) =0$,  then
 \begin{eqnarray}
 \|(1+y)^{\lambda}f\|_{L^{2}(\mathbb{T} \times\mathbb{R}_{+})} \leq 
\frac{2}{2\lambda+1} \|(1+y)^{\lambda+1}\partial_{y}f\|_{L^{2}(\mathbb{T} \times\mathbb{R}_{+})};
\label{4.10}
\end{eqnarray}
(ii)    if $\lambda<-\frac{1}{2}$  and $ f(x,y )|_{y=0}=0$,  then
 \begin{eqnarray}
 \|(1+y)^{\lambda}f\|_{L^{2}(\mathbb{T} \times\mathbb{R}_{+})} \leq 
 -\frac{2}{2\lambda+1} \|(1+y)^{\lambda+1}\partial_{y}f\|_{L^{2}(\mathbb{T} \times\mathbb{R}_{+})}.
\label{4.11}
\end{eqnarray}
\end{Lemma}
\begin{proof}
The proof is elementary. Note that the term
\begin{eqnarray*}
&&\|(1+y)^{\lambda}f\|_{L^{2}(\mathbb{T} \times\mathbb{R}_{+})} ^{2}
=\frac{1}{2\lambda+1}\int_{\mathbb{T}}\int_{\mathbb{R}_{+}}f^{2}( x,y)d(1+y)^{2\lambda+1} dx  \nonumber \\
&&=\frac{1}{2\lambda+1}\int_{\mathbb{T}}(1+y)^{2\lambda+1}f^{2}( x,y) \big|_{y=0}^{y=+\infty}dx
-\frac{2}{2\lambda+1} \int_{\mathbb{T}}\int_{\mathbb{R}_{+}} (1+y)^{2\lambda+1}f ( x,y) \partial_{y}f(x,y)dxdy \nonumber \\
&& \leq \left|-\frac{2}{2\lambda+1}\right| \|(1+y)^{\lambda }  f\|_{L^{2}(\mathbb{T} \times\mathbb{R}_{+})}  \|(1+y)^{\lambda+1}\partial_{y}f\|_{L^{2}(\mathbb{T} \times\mathbb{R}_{+})},
 \end{eqnarray*}
which gives $(\ref{4.10})$-$(\ref{4.11})$.
\end{proof}

\begin{Lemma}(\cite{[2]})\label{y4.2}
Let $s\geq 4$ be an integer, $\gamma\geq 1$, $\sigma>\gamma+\frac{1}{2}$ and $\delta\in (0,1)$. For any $w\in H^{s,\gamma}_{\sigma,\delta}(\mathbb{T}\times\mathbb{R}_{+})$, then  for $k=0,1,2,\cdots, s$,
we have
 \begin{eqnarray}
 &&\|(1+y)^{\gamma} g_{k}\|_{L^{2}}\leq\|(1+y)^{\gamma}\partial _{x}^{k}w\|_{L^{2}}+\delta^{-2}\|(1+y)^{\gamma-1}\partial _{x}^{k}(u-U)\|_{L^{2}}  ,
 \label{4.5}
\end{eqnarray}
and
 \begin{eqnarray}
 &&\|(1+y)^{\gamma}\partial _{x}^{k}w\|_{L^{2}}+\|(1+y)^{\gamma-1}\partial _{x}^{k}(u-U)\|_{L^{2}}\leq C_{\gamma,\sigma,\delta} \left(\|\partial_{x}^{k}U\|_{{L^2}(\mathbb{T})}+\|(1+y)^{\gamma }g_{k}\|_{L^{2}}\right),\label{4.4}
\end{eqnarray}
where $C_{s, \gamma,\sigma,\delta  }>0$ is a constant and only depends on $s, \gamma,\sigma,\delta  $.\\
%
\end{Lemma}

\begin{Lemma}(\cite{[2]})\label{y4.3}
Let $f:\mathbb{T} \times\mathbb{R}_{+}\rightarrow \mathbb{R}$.  Then there exists a constant $C>0$ such that
 \begin{eqnarray}
 \|f(x,y )\|_{L^{\infty}(\mathbb{T} \times \mathbb{R}_{+})}    \leq C
\left(\|f(x,y)\|_{L^{2}(\mathbb{T}\times \mathbb{R}_{+} )}   +\|\partial_{x}f(x,y)\|_{L^{2}(\mathbb{T}\times \mathbb{R}_{+} )}  +\|\partial_{y }^{2}f(x,y)\|_{L^{2}(\mathbb{T}\times \mathbb{R}_{+} )} \right).
\label{4.12}
\end{eqnarray}

\end{Lemma}

\begin{Lemma}(\cite{[2]})\label{y4.4}
Let $s\geq 4$ is an integer, $\gamma\geq 1$, $\sigma>\gamma+\frac{1}{2}$ and $\delta\in (0,1)$. Then for any $w\in H^{s,\gamma}_{\sigma,\delta}(\mathbb{T}\times\mathbb{R}_{+})$,
we have the following inequalities: \\
$(i)$ for $k=0, 1, 2,  \cdots, s-1 $,
\begin{eqnarray}
 \| \frac{\partial_{x}^{k} v +y\partial_x^{k+1}}{1+y}  \|_{L^{2}} \leq C_{s, \gamma,\sigma,\delta  } (\|w\|_{H^{s,\gamma}_{g}}+\|\partial_{x}^{s}U\|_{{L^2}(\mathbb{T})});
\label{4.13}
\end{eqnarray}
(ii) for $k=0, 1, 2,  \cdots, s $,
\begin{eqnarray}
 \|  (1+y)^{\gamma-1}\partial_x^{k}(u-U) \|_{L^{2}} \leq C_{s, \gamma,\sigma,\delta  } (\|w\|_{H^{s,\gamma}_{g}}+\|\partial_{x}^{s}U\|_{{L^2}(\mathbb{T})});
\label{4.14}
\end{eqnarray}
(iii) for $k=0, 1, 2,  \cdots, s-2 $,
\begin{eqnarray}
 \|  \frac{\partial_{x}^{k} v }{1+y}  \|_{L^{\infty}} \leq C_{s, \gamma,\sigma,\delta  } (\|w\|_{H^{s,\gamma}_{g}}+\|\partial_{x}^{s}U\|_{{L^2}(\mathbb{T})});
\label{4.15}
\end{eqnarray}
(iv) for $k=0, 1, 2,  \cdots, s-1 $,
\begin{eqnarray}
 \|  \partial_{x}^{k}u  \|_{L^{\infty}} \leq C_{s, \gamma,\sigma,\delta  }  (\|w\|_{H^{s,\gamma}_{g}}+\|\partial_{x}^{s}U\|_{{L^2}(\mathbb{T})});
\label{4.16}
\end{eqnarray}
(v) for $k=0, 1, 2,  \cdots, s-2$,
\begin{eqnarray}
 \|  (1+y)^{\gamma+k_{\alpha_{2}} }D^{k  }w  \|_{L^{\infty}} \leq C_{s, \gamma,\sigma,\delta  } \|w\|_{H^{s,\gamma}_{g}};
\label{4.17}
\end{eqnarray}
(vi) for all $|\alpha| \leq s$,
\begin{eqnarray}
\|(1+y)^{\gamma+\alpha_2}D^{\alpha} w\|_{L^2} \leq
\left\{
    \begin{array}{ll}
        C_{s, \gamma,\sigma,\delta  }  (\|w\|_{H^{s,\gamma}_{g}}+\|\partial_{x}^{s}U\|_{{L^2}(\mathbb{T})}) & if~ \alpha=(s,0), \\
       \|w\|_{H^{s,\gamma}_{g}} & if ~ \alpha \neq (s,0);
    \end{array}
\right.\label{4.52}
\end{eqnarray}
(vii) for all $k=0, 1, 2,  \cdots, s$,
\begin{eqnarray}
\|(1+y)^{\gamma}g_k\|_{L^2(\mathbb{T})} \leq
\left\{
    \begin{array}{ll}
        C_{s, \gamma,\sigma,\delta  }  (\|w\|_{H^{s,\gamma}_{g}}+\|\partial_{x}^{s}U\|_{{L^2}(\mathbb{T})}) & if~ k=0, 1, 2,  \cdots, s-1, \\
       \|w\|_{H^{s,\gamma}_{g}} & if ~ k=s,
    \end{array}
\right.\label{4.51}
\end{eqnarray}
where $C_{s, \gamma,\sigma,\delta  }$ is a constant and only depends on $s, \gamma,\sigma,\delta  $.
\end{Lemma}

\section{Uniform estimates on the regularized system}
In this section, we will estimate the norm $\|w\|_{H^{s,\gamma}_{g}}^{2}$  by the energy method. We consider the regularized equations to the problem $(\ref{1.4})$ for any $\epsilon$,
\begin{equation}\left\{
\begin{array}{ll}
\partial _{t}u^\epsilon+u^\epsilon\partial _{x}u^\epsilon+v^\epsilon\partial _{y}u^\epsilon=(U-u^\epsilon)+\epsilon^2\partial _{x}^{2}u^\epsilon +\partial _{y}^{2}u^\epsilon-\partial _{x}P^\epsilon,\\
\partial _{x}u^\epsilon+\partial _{y}v^\epsilon=0,\\
u^{\epsilon}|_{t=0}=u_0,\\
u^\epsilon|_{y=0}=v^\epsilon |_{y=0}=0,\\
\lim\limits_{y\rightarrow+\infty}u^\epsilon(t,x,y)=U(t,x),
\label{2.0001}
\end{array}
         \right.\end{equation}
where $P^\epsilon$ and $U$ satisfy a regularized Bernoulli's law:
\begin{eqnarray}
\partial_{t}U+U\partial_{x}U-\epsilon^2\partial^2_xU+\partial_{x}P^\epsilon=0 .
\end{eqnarray}
Then, the regularized vorticity $w^\epsilon=\partial_y u^\epsilon$ satisfies the following regularized vorticity system for any $\epsilon>0$,
\begin{equation}\left\{
\begin{array}{ll}
\partial _{t}w^\epsilon+u^\epsilon\partial _{x}w^\epsilon+v^\epsilon\partial _{y}w^\epsilon=-w^\epsilon+\epsilon^2\partial _{x}^{2}w^\epsilon +\partial _{y}^{2}w^\epsilon,\\
{w^\epsilon}|_{t=0}=w_0=\partial _{y}u_0,\\
\partial_y{w^\epsilon}|_{y=0}=\partial_x P^\epsilon -U,
\label{2.0002}
\end{array}
         \right.\end{equation}
where the velocity field $(u^\epsilon,v^\epsilon)$ is given
\begin{eqnarray}
u^\epsilon(t,x,y)=U-\int^{+\infty}_y w(t,x,\widetilde{y})d \widetilde{y}, \quad v^\epsilon(t,x,y)=-\int^{y}_0 \partial_x u(t,x,\widetilde{y})d \widetilde{y}.
\end{eqnarray}
From now on, we drop the superscript $\epsilon$ for simplicity of notations.
\subsection{Estimates on $D^{\alpha}w$ }\label{su.1}
In this subsection, we will estimate the norm with weight $(1+y)^{ \gamma+ \alpha_{2}}$ on $D^{\alpha}w$, $\alpha=(\alpha_{1},\alpha_{2})$, $|\alpha|\leq s$, and $\alpha_{1}$ should be restricted as $\alpha_{1}\leq s-1$. Otherwise, it is impossible to directly estimate the norm with $\partial _{x}^{s}w$.
\begin{Lemma}(Reduction of boundary data)\label{y2.1}
If $w$ solves $(\ref{2.0001})$ and $(\ref{2.0002})$, then on the boundary at $y=0$, we have,
\begin{equation}\left\{
\begin{array}{ll}
\partial _{y}w|_{y=0}=\partial _{x}P-U(t,x),\\
\partial _{y}^{3}w|_{y=0}= (\partial _{t}-\epsilon^2\partial_x^2)(\partial _{x}P-U(t,x))+(\partial _{x}P-U(t,x))+w\partial _{x}w|_{y=0}.
\end{array}
         \right.\end{equation}
For any $2 \leq k \leq \frac{s}{2}$, there are some constants $C_k$, $C_{\wedge_{\alpha},k,l,\rho^1,\rho^2,...,\rho^j}$,   not depending on $\epsilon$ or $(u,v,w)$, such that
\begin{eqnarray}
\begin{aligned}
\partial_y^{2k+1}w|_{y=0}&=C_k\sum\limits_{s=0}^{k}(\partial_t-\epsilon^2\partial^2_x)^s(\partial_x P -U)\\
&\quad+\sum\limits_{l=0}^{k-1}\epsilon^{2l}\sum\limits_{j=2}^{\max\{2,k-l\}}\sum\limits_{\rho \in A^j_{k,l}}C_{\wedge_{\alpha},k,l,\rho^1,\rho^2,...,\rho^j}\prod^{j}_{i=1}D^{\rho^i}w \big{|}_{y=0}
\label{2.005}
\end{aligned}
\end{eqnarray}
where $A^j_{k,l}:=\{\rho:=(\rho^1,\rho^2,...,\rho^j)\in \mathbb{N}^{2j};3\sum\limits_{i=1}^{j}\sum\limits_{i=1}^{j}\rho^i_2=2k+4l+1,\sum\limits_{i=1}^{j}\rho^i_1 \leq k+2l-1,\sum\limits_{i=1}^{j}\rho_2^j\leq 2k-2l-2, ~and~ |\rho^i| \leq 2k-l-1 ~for~ all~ i=1,2,...,j\}$.
\end{Lemma}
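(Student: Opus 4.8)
The plan is to derive the boundary formulas by repeatedly differentiating the vorticity equation $(\ref{2.0002})_1$ in $t$ and evaluating at $y=0$, then converting $y$-derivatives into $t$- and $x$-derivatives via the equation itself. First I would establish the two explicit low-order identities. The relation $\partial_y w|_{y=0}=\partial_x P-U$ is simply the boundary condition $(\ref{2.0002})_3$. For $\partial_y^3 w|_{y=0}$, I would rewrite the vorticity equation as
\begin{equation*}
\partial_y^2 w = \partial_t w + u\partial_x w + v\partial_y w + w - \epsilon^2\partial_x^2 w,
\end{equation*}
differentiate once more in $y$, and evaluate at $y=0$. Here the crucial simplifications are the no-slip conditions $u|_{y=0}=0$ and $v|_{y=0}=0$ together with $\partial_y v|_{y=0}=-\partial_x u|_{y=0}=0$ (from incompressibility and $u|_{y=0}=0$); these kill most of the transport terms, leaving only $\partial_y(v\partial_y w)|_{y=0}=\partial_y v\,\partial_y w|_{y=0}+v\,\partial_y^2 w|_{y=0} = 0$, wait — one must be careful: $\partial_y(u\partial_x w)|_{y=0}=\partial_y u\,\partial_x w|_{y=0}=w\partial_x w|_{y=0}$, which produces the nonlinear term. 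After collecting, $\partial_y^3 w|_{y=0}=(\partial_t-\epsilon^2\partial_x^2)(\partial_x P-U)+(\partial_x P-U)+w\partial_x w|_{y=0}$, matching the claim.

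Next I would set up the induction on $k$ for the general formula $(\ref{2.005})$. The inductive hypothesis is that $\partial_y^{2k-1}w|_{y=0}$ has the asserted structure: a linear combination of $(\partial_t-\epsilon^2\partial_x^2)^s(\partial_x P-U)$ for $s\le k-1$, plus a sum of multilinear expressions $\prod_i D^{\rho^i}w|_{y=0}$ with the index constraints in $A^j_{k-1,l}$. To pass to $k$, I apply the operator $(\partial_t-\epsilon^2\partial_x^2-1)$ — or rather solve the equation $\partial_y^2 w=(\partial_t+u\partial_x+v\partial_y+1-\epsilon^2\partial_x^2)w$ — to express $\partial_y^{2k+1}w|_{y=0}=\partial_y^2(\partial_y^{2k-1}w)|_{y=0}$ in terms of lower $y$-derivatives. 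Concretely, I would differentiate the hypothesis twice in $y$ (which is legitimate since $w$ is smooth for the regularized system), each time replacing any $\partial_y^2$ acting on a factor of $w$ by the right-hand side of the equation, and then restrict to $y=0$ using $u|_{y=0}=0$, $v|_{y=0}=0$, $\partial_y v|_{y=0}=0$, $\partial_y u|_{y=0}=w|_{y=0}$. The linear part generates one more power of $(\partial_t-\epsilon^2\partial_x^2)$ on the boundary datum $\partial_x P-U$ (contributing the $s=k$ term), while differentiating the nonlinear products generates new products; the key is to track carefully how the multi-index bookkeeping — total $x$-order, total $y$-order, individual orders, and powers of $\epsilon^2$ — evolves, and to verify it stays inside $A^j_{k,l}$.

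The main obstacle I anticipate is precisely this combinatorial bookkeeping: verifying that after two $y$-differentiations and the substitutions, every resulting product $\prod_i D^{\rho^i}w|_{y=0}$ satisfies all four constraints defining $A^j_{k,l}$ — namely the exact $y$-order balance $3\sum_i\rho^i_2=2k+4l+1$ (the "$3$" reflecting that each $\partial_y^2$ substitution trades two $y$-derivatives for a full differential operator of effective weight $3$ in this counting), the upper bounds $\sum_i\rho^i_1\le k+2l-1$ and $\sum_i\rho^i_2\le 2k-2l-2$, and the per-factor bound $|\rho^i|\le 2k-l-1$. One must check that new factors $w|_{y=0}$ introduced by the substitution $\partial_y u|_{y=0}=w|_{y=0}$ (which increases $j$ and shifts the $\epsilon$-power index $l$) do not violate these, and that the appearance of $v$-factors — nominally outside the class $D^{\rho}w$ — is always accompanied by a $\partial_y$ or restricted to $y=0$ so that $v|_{y=0}=0$ eliminates the offending terms or $\partial_y v=-\partial_x u$ re-expresses them in terms of $w$. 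I would organize this as a lemma on "admissible multilinear boundary expressions" closed under the operation $P\mapsto \partial_y^2 P|_{y=0}$ (with substitution), after which the induction is immediate; the low-order base cases $k=2$ would be checked by hand to fix the constants $C_k$ and $C_{\wedge_\alpha,k,l,\rho^1,\dots,\rho^j}$.
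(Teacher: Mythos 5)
Your plan follows essentially the same route as the paper: verify the low-order identities from the boundary condition and one $y$-differentiation of the vorticity equation, then induct on $k$ via the recursion $\partial_y^{2k+1}w|_{y=0}=\{(\partial_t-\epsilon^2\partial_x^2+1)\partial_y^{2k-1}w+\partial_y^{2k-1}(u\partial_x w+v\partial_y w)\}|_{y=0}$, converting tangential derivatives of the boundary products back through the equation and checking the $A^j_{k,l}$ index constraints by counting (the paper likewise leaves this as routine). Just make sure the induction step is phrased as applying $(\partial_t-\epsilon^2\partial_x^2)$ to the boundary formula and using the differentiated vorticity equation at $y=0$ (your "or rather" correction), since one cannot literally differentiate the trace identity in $y$; with that reading your proposal matches the paper's proof.
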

\begin{proof}
According to equation $(\ref{2.0001})$ and the boundary condition $(\ref{1.2})_{2}$, we get
$$\partial _{y}w|_{y=0}=\partial _{x}P-U(t,x)=K,$$
which implies
\begin{eqnarray}
\begin{aligned}
\partial _{y}^{3}w \big{|}_{y=0}&= \left\{\partial _{t}\partial _{y}w+\partial _{y}(u\partial _{x}w)+\partial _{y}(v\partial _{y}w)+\partial _{y}w-\epsilon^2\partial_y\partial_x^2w\right\} \big{|}_{y=0}\\
&=(\partial _{t}-\epsilon^2\partial_x^2)K+K+w\partial _{x}w \big{|}_{y=0},
\end{aligned}
\end{eqnarray}
and
\begin{eqnarray}
\begin{aligned}
\partial _{y}^{2n+1}w \big{|}_{y=0}&= \left\{\left(\partial _{t}-\epsilon^2\partial_x^2w+1\right)\partial _{y}^{2n-1}w+\partial _{y}^{2n-1}(u\partial _{x}w+v\partial _{y}w)\right\} \big{|}_{y=0}.
 \label{2.15}
\end{aligned}
\end{eqnarray}
Hence, for $n=2$,
\begin{eqnarray}
\partial _{y}^{5}w \big{|}_{y=0}
&=& (\partial _{t}-\epsilon^2\partial_x^2)\partial_y^{3}w|_{y=0}+\partial _{y}^{3}w|_{y=0}+(2\partial_y w\partial_x\partial_yw+3w\partial_x\partial_y^2w-2\partial_xw\partial_y^2w) \big{|}_{y=0}\nonumber\\
&=&(\partial _{t}-\epsilon^2\partial_x^2)^2K+(\partial _{t}-\epsilon^2\partial_x^2)K+(\partial _{t}-\epsilon^2\partial_x^2)(w\partial_x w)|_{y=0}+(\partial _{t}-\epsilon^2\partial_x^2)K\nonumber\\
&\quad&+K+w\partial _{x}w|_{y=0}+(2\partial_y w\partial_x\partial_yw+3w\partial_x\partial_y^2w-2\partial_xw\partial_y^2w) \big{|}_{y=0},
 \label{2.16}
\end{eqnarray}
here
\begin{equation}\left\{
\begin{array}{ll}
\partial _{t} w|_{y=0}= \partial _{y}^{2}w-u\partial _{x}w-v\partial _{y}w-w-\epsilon^2\partial_x^2 w|_{y=0}=\partial _{y}^{2}w-w+\epsilon^2\partial_x^2 w\big{|}_{y=0}, \\
\partial _{x}\partial _{t} w|_{y=0} =\partial _{x}\partial _{y}^{2}w-\partial _{x}w+\epsilon^2\partial_x^3 w\big{|}_{y=0},
\label{2.17}
\end{array}
         \right.\end{equation}
\begin{eqnarray}
(\partial _{t}-\epsilon^2\partial_x^2)(w\partial_x w)\big{|}_{y=0}=
(w\partial_x\partial_y^2w+\partial_xw\partial_y^2w-2w\partial_xw-2\epsilon^2\partial_x\partial_x^2w)\big{|}_{y=0}.
 \label{2.18}
\end{eqnarray}
It follows from $(\ref{2.16})-(\ref{2.18})$ that
\begin{eqnarray}
\begin{aligned}
\partial _{y}^{5}w\big{|}_{y=0}
&=C_n\sum\limits_{i=0}^{2}(\partial _{t}-\epsilon^2\partial_x^2)^i K+(2\partial_y w\partial_x\partial_yw+4w\partial_x\partial_y^2w-\partial_xw\partial_y^2w)\big{|}_{y=0} \\
&\quad-w\partial _{x}w \big{|}_{y=0}-2\epsilon^2\partial_xw\partial_x^2w\big{|}_{y=0}.
 \label{2.19}
\end{aligned}
\end{eqnarray}
For $n=3$,
\begin{eqnarray}
\partial _{y}^{7}w\big{|}_{y=0}
&=&(\partial _{t}-\epsilon^2\partial_x^2)\partial_y^{5}w\big{|}_{y=0}+\partial_y^{5}w\big{|}_{y=0}+\partial_y^5(u\partial _{x}w+v\partial _{y}w)\big{|}_{y=0}\nonumber \\
&=&C_n\sum\limits_{i=1}^{3}(\partial _{t}-\epsilon^2\partial_x^2)^iK+(\partial _{t}-\epsilon^2\partial_x^2)(2\partial_y w\partial_x\partial_yw+4w\partial_x\partial_y^2w-\partial_xw\partial_y^2w)\big{|}_{y=0}\nonumber \\
&\quad&-(\partial _{t}-\epsilon^2\partial_x^2)(w\partial_x w)\big{|}_{y=0}-(\partial _{t}-\epsilon^2\partial_x^2)2\epsilon^2\partial_xw\partial_x^2w\big{|}_{y=0}\nonumber\\
&\quad&+\partial_y^{5}w\big{|}_{y=0}+\partial_y^5(u\partial _{x}w+v\partial _{y}w)\big{|}_{y=0}.
 \label{2.20}
\end{eqnarray}
Since the first term and last four terms on the right-hand side  of $(\ref{2.20})$ are in the desired form,  we only need to calculate the second term on the right-hand side of $(\ref{2.20})$,
\begin{eqnarray}
&&(\partial _{t}-\epsilon^2\partial_x^2)(2\partial_y w\partial_x\partial_yw+4w\partial_x\partial_y^2w-\partial_xw\partial_y^2w)\big{|}_{y=0}\nonumber\\
&&= (-\partial _{x}\partial _{t}w\partial _{y}^{2}w-\partial _{x}w \partial _{y}^{2}\partial _{t}w+2\partial _{x}\partial _{y}\partial _{t}w\partial _{y}w+2\partial _{x}\partial _{y}w\partial _{y}\partial _{t}w
+4w\partial _{x}\partial _{y}^{2}\partial _{t}w +4\partial _{t}w \partial _{x}\partial _{y}^{2}w )\big{|}_{y=0}\nonumber\\
&&\quad-\epsilon^2\partial_x^2(2\partial_y w\partial_x\partial_yw+4w\partial_x\partial_y^2w-\partial_xw\partial_y^2w)\big{|}_{y=0},\label{2.21}
\end{eqnarray}
 here
 \begin{eqnarray}
\begin{aligned}
&-(\partial _{x}\partial _{t}w\partial _{y}^{2}w)\big{|}_{y=0}=-\partial _{y}^{2}w(\partial _{x}\partial _{y}^{2}w-\partial _{x}w+\epsilon^2\partial_x^3 w)\big{|}_{y=0}
 \label{2.22}
\end{aligned}
\end{eqnarray}
 \begin{eqnarray}
-\partial _{y}^{2}\partial _{t} w\big{|}_{y=0}
&=& -\partial _{y}^{2}(\partial _{y}^{2}w-u\partial _{x}w-v\partial _{y}w-w+\epsilon^2\partial_x^2w)\big{|}_{y=0}\nonumber\\
&=&-(\partial _{y}^{4}w-\partial _{y}^{2}w+2w\partial _{x}\partial _{y}w+\epsilon^2\partial _{y}^{2}\partial_x^2w)\big{|}_{y=0},
\end{eqnarray}
which implies
 \begin{eqnarray}
\begin{aligned}
\partial _{x}w\partial _{y}^{2}\partial _{t} w\big{|}_{y=0}&=\sum\limits_{j=0}^{4} \wedge_{\alpha}\partial _{x}  \partial _{y}^{j} w  \partial _{y}^{4-j} w
+\sum\limits_{j=0}^{2 } \wedge_{\alpha}\partial _{x}  \partial _{y}^{j} w  \partial _{y}^{2-j} w+\sum\limits_{\rho_{1}+\rho_{2}+\rho_{3}=3} \wedge_{\alpha} D^{\rho_{1}}w D^{\rho_{2}}w D^{\rho_{3}}w \big{|}_{y=0}\\
&\quad -\epsilon^2\partial _{x}w\partial _{y}^{2}\partial_x^2w\big{|}_{y=0}.
\end{aligned}
\end{eqnarray}
Since $ v=\partial _{x}\partial _{y}^{-1}w$,
\begin{eqnarray}
\partial _{x}\partial _{y}\partial _{t} w\big{|}_{y=0}
&=&\partial _{x}\partial _{y}(\partial _{y}^{2}w-u\partial _{x}w-v\partial _{y}w-w+\epsilon^2\partial_x^2w)\big{|}_{y=0}\nonumber\\
&=&\partial _{x}\partial _{y}^{3}w-\partial _{x}\partial _{y}w+\epsilon^2\partial_y\partial_x^3w-\partial _{x}(w\partial _{x}w+u\partial _{x}\partial _{y}w+\partial _{y}v\partial _{y}w+v\partial _{y}^2w)\big{|}_{y=0}\nonumber\\
&=&\partial _{x}\partial _{y}^{3}w-\partial _{x}\partial _{y}w+\epsilon^2\partial_y\partial_x^3w-\partial _{x}w\partial _{x}w-w\partial _{x}^2w-w\partial _{y}\partial _{x}w\big{|}_{y=0}
\end{eqnarray}
which implies
 \begin{eqnarray}
\begin{aligned}
&2\partial _{x}\partial _{y}\partial _{t}w\partial _{y} w\big{|}_{y=0} \\
&=\left(\sum\limits_{j=0}^{4} \wedge_{\alpha}\partial _{x}  \partial _{y}^{j} w  \partial _{y}^{4-j} w
+\sum\limits_{j=0}^{2 } \wedge_{\alpha}\partial _{x}  \partial _{y}^{j} w  \partial _{y}^{2-j} w+\sum\limits_{\rho_{1}+\rho_{2}+\rho_{3}=3} \wedge_{\alpha} D^{\rho_{1}}w D^{\rho_{2}}w D^{\rho_{3}}w \right)\big{|}_{y=0}\\
&\quad+2\epsilon^2\partial_y\partial_x^3w\partial_yw\big{|}_{y=0}.
\end{aligned}
\end{eqnarray}
Similarly,
 \begin{eqnarray}
\begin{aligned}
&\left(2\partial _{x}\partial _{y}w\partial _{y}\partial _{t}w
+4w\partial _{x}\partial _{y}^{2}\partial _{t}w +4\partial _{t}w \partial _{x}\partial _{y}^{2}w\right) \big{|}_{y=0}\\
&\in \left(\sum\limits_{j=0}^{4} \wedge_{\alpha}\partial _{x}  \partial _{y}^{j} w  \partial _{y}^{4-j} w
+\sum\limits_{j=0}^{2 } \wedge_{\alpha}\partial _{x}  \partial _{y}^{j} w  \partial _{y}^{2-j} w+\sum\limits_{\rho_{1}+\rho_{2}+\rho_{3}=3} \wedge_{\alpha} D^{\rho_{1}}w D^{\rho_{2}}w D^{\rho_{3}}w \right)\big{|}_{y=0}\\
&\quad+2\epsilon^2\partial _{x}\partial _{y}w\partial _{y}\partial _{x}^2w\big{|}_{y=0}+4\epsilon^2w\partial _{x}^3\partial _{y}^{2}w\big{|}_{y=0}+4\epsilon^2\partial _{x}^2w \partial _{x}\partial _{y}^{2}w\big{|}_{y=0}. \label{2.23}
\end{aligned}
\end{eqnarray}
Thus it follows from $(\ref{2.20})-(\ref{2.23})$ that
\begin{align}
\partial _{y}^{7}w\big{|}_{y=0}
&=\sum\limits_{i=0}^{3}(\partial _{t}-\epsilon^2\partial_x^2)^iK+(1+\epsilon^2\partial^2_x+\epsilon^4\partial^4_x)(w\partial _{x}w)\big{|}_{y=0}+\sum\limits_{j=0}^{0 } \partial _{x}  \partial _{y}^{0} w  \partial _{y}^{0-j} w\big{|}_{y=0}\nonumber\\
&\quad+\sum\limits_{j=0}^{2 } \wedge_{\alpha}\partial _{x}  \partial _{y}^{j} w  \partial _{y}^{2-j} w\big{|}_{y=0}+\sum\limits_{j=0}^{4 } \wedge_{\alpha}\partial _{x}  \partial _{y}^{j} w  \partial _{y}^{4-j} w\big{|}_{y=0} +\sum\limits_{\rho_{1}+\rho_{2}+\rho_{3}=3} \wedge_{\alpha} D^{\rho_{1}}w D^{\rho_{2}}w D^{\rho_{3}}w \big{|}_{y=0}\nonumber\\
&\quad+\sum\limits_{j_x=0}^{2 } \sum\limits_{j_y=0}^{2 }\wedge_{\alpha}\partial _{x}  \partial _{y}^{j_x}\partial _{y}^{j_y} w \partial _{y}^{2-j_x} \partial _{y}^{2-j_y} w\big{|}_{y=0}.
 \label{2.24}
\end{align}
We justify the formula (\ref{2.24}) for $k=3$.

Now, using the same algorithm, we are going to prove formula (\ref{2.005}) by induction on $k$. For notational convenience, we denote
\begin{eqnarray}
\mathcal{A}_k:=\bigg\{\sum\limits_{l=0}^{k-1}\epsilon^{2l}\sum\limits_{j=2}^{\max\{2,k-l\}}\sum\limits_{\rho \in A^j_{k,l}}C_{\wedge_{\alpha},k,l,\rho^1,\rho^2,...,\rho^j}\prod^{j}_{i=1}D^{\rho^i}w\big{|}_{y=0}\bigg\}.
\end{eqnarray}
Using this notation, we will prove $\partial_y^{2k+1}w\big{|}_{y=0}-(\partial_t-\epsilon^2\partial_x^2)^k(\partial_x  P-U) \in \mathcal{A}_k$. Assuming that formula (\ref{2.005}) holds for $k=n$, we will show that it also holds for $k=n+1$ as follows. Then we differentiate the vorticity equation with respect to $y$ $2n+1$ times to obtain
\begin{align}
\partial _{y}^{2n+3}w\big{|}_{y=0}&= \left\{(\partial _{t}-\epsilon^2\partial_x^2)\partial _{y}^{2n+1}w+\partial _{y}^{2n+1}(u\partial _{x}w+v\partial _{y}w)+\partial _{y}^{2n+1}w\right\}\big{|}_{y=0}\nonumber\\
&=\left\{(\partial _{t}-\epsilon^2\partial_x^2)\partial _{y}^{2n+1}w+\sum\limits_{j=0}^{2n } \wedge_{\alpha}\partial _{x}  \partial _{y}^{j} w  \partial _{y}^{2n-j} w+\partial _{y}^{2n+1}w\right\}\big{|}_{y=0}.
\label{2.006}
\end{align}
By routine checking, one may show that the last three terms of (\ref{2.006}) belong to
$\mathcal{A}_{k+1}$, so it only remains to deal with the term $(\partial _{t}-\epsilon^2\partial_x^2)\partial _{y}^{2n+1}w$.
Thanks to the induction hypothesis, there exist constants $C_{\wedge_{\alpha},n,l,\rho^1,\rho^2,...,\rho^j}$ such that
\begin{align}
\partial_y^{2n+1}w\big{|}_{y=0}&=C_n\sum\limits_{s=0}^{n}(\partial_t-\epsilon^2\partial^2_x)^n(\partial_x P -U)\nonumber\\
&\quad+\sum\limits_{l=0}^{k-1}\epsilon^{2l}\sum\limits_{j=2}^{\max\{2,n-l\}}\sum\limits_{\rho \in A^j_{n,l}}C_{\wedge_{\alpha},n,l,\rho^1,\rho^2,...,\rho^j}\prod^{j}_{i=1}D^{\rho^i}w\big{|}_{y=0}.
\end{align}
Thus we have, up to a relabeling of the indices $\rho^i$,
\begin{align}
(\partial_t-\epsilon^2\partial^2_x)\partial_y^{2n+1}w\big{|}_{y=0}&=C_n\sum\limits_{s=1}^{n+1}(\partial_t-\epsilon^2\partial^2_x)^{s}(\partial_x P -U)\nonumber\\
&\quad+\sum\limits_{l=0}^{n-1}\epsilon^{2l}\sum\limits_{j=2}^{\max\{2,n-l\}}\sum\limits_{\rho \in A^j_{n,l}} \widetilde{C}_{\wedge_{\alpha},n,l,\rho^1,\rho^2,...,\rho^j}(\partial_t-\epsilon^2\partial^2_x)D^{\rho^1}w\prod^{j}_{i=2}D^{\rho^i}w\big{|}_{y=0}\nonumber\\
&\quad-\sum\limits_{l=0}^{n-1}\epsilon^{2l+2}\sum\limits_{j=2}^{\max\{2,n-l\}}\sum\limits_{\rho \in A^j_{n,l}}\widetilde{\widetilde{C}}_{\wedge_{\alpha},n,l,\rho^1,\rho^2,...,\rho^j}\partial_x D^{\rho^1}w\partial_x D^{\rho^2}w\prod^{j}_{i=3}D^{\rho^i}w\big{|}_{y=0},
\label{2.007}
\end{align}
where $\widetilde{C}_{\wedge_{\alpha},n,l,\rho^1,\rho^2,...,\rho^j}$ and $\widetilde{\widetilde{C}}_{\wedge_{\alpha},n,l,\rho^1,\rho^2,...,\rho^j}$ are some new constants depending on $C_{\wedge_{\alpha},n,l,\rho^1,\rho^2,...,\rho^j}$. It is worth noting that the last term on the right-hand side of (\ref{2.007})
belongs to $\mathcal{A}_{n+1}$, so it remains to check whether the second term on right-hand
side of (\ref{2.007}) also belongs to $\mathcal{A}_{n+1}$.

Differentiating the vorticity equation $(\ref{2.0002})_1$ $\rho_1^1$ times and $\rho_2^1$ times with respect to $x$  and $y$, respectively and then calculating at $y=0$, we have, by denoting $e_2=(0,1)$,
\begin{align}
(\partial_t-\epsilon^2\partial_x^2)D^{\rho^1}w \big{|}_{y=0}=&-\sum_{\substack{\beta \le \rho^1 \\
\beta_2 \geq 1}}\binom{\rho^1}{\beta} D^{\beta-e_2} w \partial_x D^{\rho_1-\beta} w \big{|}_{y=0}+\sum_{\substack{\beta \le \rho^1 \\
\beta_2 \geq 2}}\binom{\rho^1}{\beta} \partial_xD^{\beta-2e_2} w \partial_y D^{\rho_1-\beta} w \big{|}_{y=0}\nonumber\\
&+\partial_y^2 D^{\rho^1}w \big{|}_{y=0}+ D^{\rho^1}w \big{|}_{y=0}.
\label{2.0027}
\end{align}
Using (\ref{2.0027}), one may justify by a routine counting of indices that the second term
on the right-hand side of (\ref{2.007}) belongs to $\mathcal{A}_{n+1}$.
This thus completes the proof.
\end{proof}
\begin{Lemma}\label{y2.2}
Let $s \geq 4$ be an even integer, $\gamma\geq 1$, $\sigma>\gamma+\frac{1}{2}$ and $\delta\in (0,1)$. If  $w\in H^{s,\gamma}_{\sigma,\delta}$  solves $(\ref{2.0001})$  and $(\ref{2.0002})$, we have the following  estimates:\\
(i) when $|\alpha|\leq s-1$,
\begin{eqnarray}
\bigg|\int_{\mathbb{T} } D^{\alpha}w\partial _{y}D^{\alpha}wdx \big{|} _{y=0}\bigg|\leq \frac{1}{12}\|(1+y)^{\gamma+\alpha_2+1}\partial^2_y D^\alpha w\|_{L^2}^2+C\|w\|^2_{H^{s,\gamma}_g},
\end{eqnarray}
(ii) when $|\alpha|=s$, $\alpha_2$ is even,
\begin{align}
\bigg|\int_{\mathbb{T} } D^{\alpha}w\partial _{y}D^{\alpha}wdx \big{|} _{y=0}\bigg|\leq &\frac{1}{12}\|(1+y)^{\gamma+\alpha_2}\partial_y D^\alpha w\|^2_{L^2}+C_{s,\gamma,\sigma,\delta}(1+\|w\|^2_{H^{s,\gamma}_g})^{s-2}\|w\|^2_{H^{s,\gamma}_g}\nonumber\\
&+C_s \sum\limits_{l=0}^{\frac{s}{2}}\|\partial^{l}_t(\partial_xP-U)\|_{H^{s-2l}(\mathbb{T})}^{2},
\end{align}
(iii) when $|\alpha|=s$, $\alpha_2$ is odd,
\begin{align}
\bigg|\int_{\mathbb{T} } D^{\alpha}w\partial _{y}D^{\alpha}wdx \big{|} _{y=0}\bigg|\leq& \frac{1}{12}\|(1+y)^{\gamma+\alpha_2+1}\partial_x^{\alpha_1-1}\partial_y^{\alpha_2+2} w\|^2_{L^2}+C_{s,\gamma,\sigma,\delta}(1+\|w\|^2_{H^{s,\gamma}_g})^{s-2}\|w\|^2_{H^{s,\gamma}_g}\nonumber\\
&+C_s \sum\limits_{l=0}^{\frac{s}{2}}\|\partial^{l}_t(\partial_xP-U)\|_{H^{s-2l}(\mathbb{T})}^{2}.
\end{align}
\begin{proof}
\emph{Case 1.} When $|\alpha|\leq s-1$,  using the following trace estimate
\begin{eqnarray}
\int_{\mathbb{T} } |f|dx \big{|}_{y=0} \leq C\left(\int^1_0\int_{\mathbb{T}}|f|dxdy+\int^1_0\int_{\mathbb{T}}|\partial_y f|dxdy\right),
\end{eqnarray}
we have
\begin{eqnarray}
\bigg|\int_{\mathbb{T} } D^{\alpha}w\partial _{y}D^{\alpha}wdx \big{|} _{y=0}\bigg|\leq \frac{1}{12}\|(1+y)^{\gamma+\alpha_2+1}\partial^2_y D^\alpha w\|_{L^2}^2+C\|w\|^2_{H^{s,\gamma}_g}.
\end{eqnarray}
\emph{Case 2.} When $|\alpha|=s$, $\alpha_2=2k$ for some $k\in \mathbb{N}$, we can apply Lemma \ref{y2.1} to $\partial_y D^\alpha  | _{y=0}$ to obtain
\begin{align}
\int_{\mathbb{T} } D^{\alpha}w\partial _{y}D^{\alpha}wdx \big{|} _{y=0}=&C_k\sum\limits_{s=0}^{k}\int_{\mathbb{T} } D^{\alpha}w(\partial_t-\epsilon^2\partial_x^2)^k\partial_x^{\alpha_1}(\partial_xP-U)dx \big{|} _{y=0}\nonumber\\
&+\sum\limits_{l=0}^{k-1}\epsilon^{2l}\sum\limits_{j=2}^{\max\{2,k-l\}}\sum\limits_{\rho \in A^j_{k,l}}C_{\wedge_{\alpha},k,l,\rho^1,\rho^2,...,\rho^j}\int_{\mathbb{T} }D^\alpha w \partial_x^{\alpha_1}(\prod^{j}_{i=1}D^{\rho^i}w)dx\big{|}_{y=0}.
\end{align}
Then we again apply the simple trace estimate to control the boundary integral as follows
\begin{align}
\bigg|\int_{\mathbb{T} } D^{\alpha}w\partial _{y}D^{\alpha}wdx \big{|} _{y=0}\bigg|\leq& \frac{1}{12}\|(1+y)^{\gamma+\alpha_2}\partial_y D^\alpha w\|^2_{L^2}+C_{s,\gamma,\sigma,\delta}(1+\|w\|_{H_g^{s,\gamma}})^{s-2}\|w\|^2_{H^{s,\gamma}_g}\nonumber\\
&+C_s \sum\limits_{l=0}^{\frac{s}{2}}\|\partial^{l}_t(\partial_xP-U)\|_{H^{s-2l}(\mathbb{T})}^{2}.
\end{align}
\emph{Case 3.} When $|\alpha|=s$, $\alpha_2=2k+1$ for some $k\in \mathbb{N}$, using integration by parts in the $x$-variable, we have
\begin{eqnarray}
\int_{\mathbb{T} } D^{\alpha}w\partial _{y}D^{\alpha}wdx \big{|} _{y=0}=-\int_{\mathbb{T}}\partial_x D^\alpha w \partial_x^{\alpha_1-1}\partial_y^{\alpha_2+1}wdx \big{|}_{y=0}.
\end{eqnarray}
Noting that the term $\partial_x D^\alpha w\big{|}_{y=0}=\partial_x^{\alpha_1+1}\partial_y^{2k+1}w \big{|} _{y=0}$ has an odd number of $y$ derivatives, then we get
\begin{align}
\bigg|\int_{\mathbb{T} } D^{\alpha}w\partial _{y}D^{\alpha}wdx \big{|} _{y=0}\bigg|\leq& \frac{1}{12}\|(1+y)^{\gamma+\alpha_2+1}\partial_x^{\alpha_1-1}\partial_y^{\alpha_2+2} w\|^2_{L^2}+C_{s,\gamma,\sigma,\delta}(1+\|w\|_{H_g^{s,\gamma}})^{s-2}\|w\|^2_{H^{s,\gamma}_g}\nonumber\\
&+C_s \sum\limits_{l=0}^{\frac{s}{2}}\|\partial^{l}_t(\partial_xP-U)\|_{H^{s-2l}(\mathbb{T})}^{2}.
\end{align}
\end{proof}
\end{Lemma}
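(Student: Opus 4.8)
The whole argument reduces to an elementary trace inequality on the strip $\{0\le y\le 1\}$, used together with the structural identity of Lemma~\ref{y2.1} for the odd normal derivatives of $w$ at $y=0$. Writing $f(x,0)=f(x,y)-\int_0^y\partial_yf(x,y')\,dy'$ and averaging over $y\in[0,1]$ yields
$$\int_{\mathbb{T}}|f|\,dx\Big|_{y=0}\le C\Big(\int_0^1\!\!\int_{\mathbb{T}}|f|\,dxdy+\int_0^1\!\!\int_{\mathbb{T}}|\partial_yf|\,dxdy\Big),$$
and, applied to $f^2$, its $L^2$ form $\|f(\cdot,0)\|_{L^2(\mathbb{T})}^2\le C\big(\|f\|_{L^2([0,1]\times\mathbb{T})}^2+\|f\|_{L^2([0,1]\times\mathbb{T})}\|\partial_yf\|_{L^2([0,1]\times\mathbb{T})}\big)$. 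On this strip every weight $(1+y)^{\beta}$ is comparable to $1$, so the bounds of Lemma~\ref{y4.4} apply freely; I also take $0<\epsilon\le1$, so the factors $\epsilon^{2l}$ below are harmless.

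For part (i), with $|\alpha|\le s-1$, apply the trace inequality to $f=D^\alpha w\,\partial_yD^\alpha w$, so $\partial_yf=|\partial_yD^\alpha w|^2+D^\alpha w\,\partial_y^2D^\alpha w$. The terms $\int_0^1\!\!\int|\partial_yD^\alpha w|^2$ and $\int_0^1\!\!\int|D^\alpha w|\,|\partial_yD^\alpha w|$ involve only derivatives of order $\le s$ with multi-index $\ne(s,0)$, hence are $\lesssim\|w\|_{H^{s,\gamma}_g}^2$ by Lemma~\ref{y4.4}; for the last term, Cauchy--Schwarz and Young give $\int_0^1\!\!\int|D^\alpha w|\,|\partial_y^2D^\alpha w|\le\frac{1}{12}\|(1+y)^{\gamma+\alpha_2+1}\partial_y^2D^\alpha w\|_{L^2}^2+C\|w\|_{H^{s,\gamma}_g}^2$, which is (i).

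For parts (ii) and (iii), with $|\alpha|=s$, the key observation is that the factor carrying the extra normal derivative, evaluated at $y=0$, has an \emph{odd} number of $y$-derivatives: in (ii) it is $\partial_yD^\alpha w|_{y=0}=\partial_x^{\alpha_1}\partial_y^{2k+1}w|_{y=0}$; in (iii) — where $\alpha_1\ge1$ because $s$ is even — one first integrates by parts in $x$, rewriting the integral as $-\int_{\mathbb{T}}\partial_xD^\alpha w\,\partial_x^{\alpha_1-1}\partial_y^{\alpha_2+1}w\,dx|_{y=0}$, and now $\partial_xD^\alpha w|_{y=0}=\partial_x^{\alpha_1+1}\partial_y^{2k+1}w|_{y=0}$ has odd $y$-order. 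In either case I substitute the formula of Lemma~\ref{y2.1} for $\partial_y^{2k+1}w|_{y=0}$, differentiate in $x$, and split the boundary integral into a linear (pressure) part and nonlinear products. I then pull out the single genuinely top-order factor — $D^\alpha w$ in (ii), $\partial_x^{\alpha_1-1}\partial_y^{\alpha_2+1}w$ in (iii), both of order $s$ with a $y$-derivative of order $s+1$ — and estimate its $L^2(\mathbb{T})$-trace; this is the \emph{only} place a dissipation-type term $\frac{1}{12}\|(1+y)^{\,\cdots}\partial_y(\cdots)\|_{L^2}^2$ is produced, the complementary error being $\lesssim\|w\|_{H^{s,\gamma}_g}^2$. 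In the linear part the remaining factor is $\partial_x^{\alpha_1}(\partial_t-\epsilon^2\partial_x^2)^{s'}(\partial_xP-U)$ (or with $\alpha_1$ replaced by $\alpha_1+1$) for $s'\le k$; expanding the operator into $\partial_t^a\partial_x^{2b}$, $a+b=s'$, and noting $\alpha_1+2a+2b=\alpha_1+2s'\le\alpha_1+\alpha_2=s$, its $L^2(\mathbb{T})$-norm is $\lesssim\sum_{l=0}^{s/2}\|\partial_t^l(\partial_xP-U)\|_{H^{s-2l}(\mathbb{T})}$, giving the last term of (ii)--(iii).

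The one genuinely delicate step is the nonlinear part, i.e.\ estimating $\|\partial_x^{\alpha_1}\prod_{i=1}^jD^{\rho^i}w\|_{L^2(\mathbb{T})}\big|_{y=0}$ for $\rho\in A^j_{k,l}$: by the Leibniz rule this is a sum of products of factors $\partial_x^{\beta_i}D^{\rho^i}w$ with $\sum_i\beta_i=\alpha_1$. Using the constraints defining $A^j_{k,l}$ — in particular $|\rho^i|\le 2k-l-1$, $\sum_i\rho_1^i\le k+2l-1$, $\sum_i\rho_2^i\le 2k-2l-2$ — together with $\alpha_1=s-\alpha_2$ and $\alpha_2\in\{2k,2k+1\}$, one checks that the largest factor has order at most $s-l-1\le s-1$, while, if it does reach $s-1$, the complementary factors together have order $\le s-2$. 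An order-$(s-1)$ factor at $y=0$ is controlled in $L^2(\mathbb{T})$ by $\|w\|_{H^{s,\gamma}_g}$ through a (cheap) trace estimate — since both it and its $y$-derivative are of order $\le s$ — while the remaining factors, of order $\le s-2$, go into $L^\infty(\mathbb{T})$ by Lemma~\ref{y4.4}, contributing $\lesssim1+\|w\|_{H^{s,\gamma}_g}$ apiece. Since the number of factors satisfies $j\le k\le s/2\le s-2$ for $s\ge4$, multiplying out, pairing against the $L^2(\mathbb{T})$-trace of the top-order factor of the boundary integral, and using Young gives the bound $C_{s,\gamma,\sigma,\delta}(1+\|w\|_{H^{s,\gamma}_g}^2)^{s-2}\|w\|_{H^{s,\gamma}_g}^2$. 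The sole remaining loose end is the borderline case $\alpha=(s,0)$ in (ii), where Lemma~\ref{y4.4} controls $D^{(s,0)}w$ in $L^2$ only modulo an additive $\|\partial_x^sU\|_{L^2(\mathbb{T})}$; this extra term is absorbed into the constant using Bernoulli's law and hypothesis (\ref{1.8}). Collecting the three contributions gives (ii) and (iii).
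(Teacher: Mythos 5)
Your argument follows the paper's proof in essentially the same way: the strip trace inequality for $|\alpha|\le s-1$, and for $|\alpha|=s$ the substitution of the boundary-reduction formula of Lemma \ref{y2.1} into $\partial_yD^\alpha w|_{y=0}$ (after one integration by parts in $x$ when $\alpha_2$ is odd), with the top-order trace absorbed into the $\frac{1}{12}$-dissipation term and the remaining factors controlled through Lemma \ref{y4.4}. In fact you give more detail than the paper itself, which skips the Leibniz index count over $A^j_{k,l}$ and the borderline case $\alpha=(s,0)$ that you flag and patch, so the proposal is sound.
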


\begin{Proposition}\label{p2.1}
Let $s \geq 4$ be an even integer, $\gamma\geq 1$, $\sigma>\gamma+\frac{1}{2}$ and $\delta\in (0,1)$. If  $w\in H^{s,\gamma}_{\sigma,\delta}$  solves $(\ref{2.0001})$  and $(\ref{2.0002})$, then we have the following uniform (in $\epsilon$) estimate:
\begin{align}
&\frac{1}{2}\frac{d}{dt}\sum\limits_{\substack{ |\alpha|\leq s\\ \alpha_{1}\leq s-1}}\|(1+y)^{ \gamma+ \alpha_{2}}D^{\alpha}w\|_{L^{2}}^{2}+\sum\limits_{\substack{ |\alpha|\leq s\\ \alpha_{1}\leq s-1}}\|(1+y)^{ \gamma+ \alpha_{2}}D^{\alpha}w\|_{L^{2}}^{2}
\nonumber \\
&\leq -\frac{1}{2}\sum\limits_{\substack{ |\alpha|\leq s\\ \alpha_{1}\leq s-1}}\|(1+y)^{ \gamma+ \alpha_{2}}\partial _{y}D^{\alpha}w\|_{L^{2}} ^{2}-\epsilon^2\sum\limits_{\substack{ |\alpha|\leq s\\ \alpha_{1}\leq s-1}} \|(1+y)^{\gamma+ \alpha_{2}}\partial _{x}D^{\alpha}w\|_{L^{2}}^2\nonumber\\
&\quad \quad+C_{s, \gamma,\sigma,\delta  }\|w\|_{H^{s,\gamma}_{g}}^2(\|\partial_{x}^{s}U\|_{{L^\infty}(\mathbb{T})}^2+\|w\|_{H^{s,\gamma}_{g}}^2)+C_{s,\gamma,\sigma,\delta}(1+\|w\|_{H_g^{s,\gamma}})^{s-2}\|w\|^2_{H^{s,\gamma}_g}\nonumber\\
&\quad \quad+C_s \sum\limits_{l=0}^{\frac{s}{2}}\|\partial^{l}_t(\partial_xP-U)\|_{H^{s-2l}(\mathbb{T})}^{2}.
\label{2.002}
\end{align}
\end{Proposition}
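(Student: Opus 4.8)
The plan is to perform a weighted energy estimate on the regularized vorticity equation $(\ref{2.0002})_1$, applying $D^\alpha$ for each multi-index $\alpha$ with $|\alpha|\leq s$ and $\alpha_1\leq s-1$, multiplying by $(1+y)^{2(\gamma+\alpha_2)}D^\alpha w$, and integrating over $\mathbb{T}\times\mathbb{R}_+$. First I would write $D^\alpha$ applied to the transport term $u\partial_x w+v\partial_y w$ using the Leibniz rule; the top-order pieces $u\partial_x D^\alpha w$ and $v\partial_y D^\alpha w$ are handled by integration by parts in $x$ and $y$ respectively, using the divergence-free condition $\partial_x u+\partial_y v=0$ to cancel the dangerous terms, exactly as in the cancellation structure of \cite{[2]}; the commutator terms $[D^\alpha, u]\partial_x w$ and $[D^\alpha,v]\partial_y w$ are lower order and are bounded using the embedding and weighted estimates of Lemma \ref{y4.4} (in particular (\ref{4.16}), (\ref{4.15}), (\ref{4.17})) together with the interpolation/Moser-type product estimates, producing the term $C_{s,\gamma,\sigma,\delta}(1+\|w\|_{H^{s,\gamma}_g})^{s-2}\|w\|^2_{H^{s,\gamma}_g}$. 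The linear damping $-w$ on the right contributes the good term $+\sum\|(1+y)^{\gamma+\alpha_2}D^\alpha w\|_{L^2}^2$ on the left-hand side. The regularizing term $\epsilon^2\partial_x^2 w$ yields, after integration by parts in $x$, the negative term $-\epsilon^2\sum\|(1+y)^{\gamma+\alpha_2}\partial_x D^\alpha w\|_{L^2}^2$ plus a harmless error (no $x$-boundary since $\mathbb{T}$).

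Next I would treat the diffusion term $\partial_y^2 w$. Integrating by parts twice in $y$ produces the main negative term $-\|(1+y)^{\gamma+\alpha_2}\partial_y D^\alpha w\|_{L^2}^2$, a boundary term at $y=0$ of the form $-\int_{\mathbb{T}}D^\alpha w\,\partial_y D^\alpha w\,dx\big|_{y=0}$, and lower-order terms coming from differentiating the weight $(1+y)^{2(\gamma+\alpha_2)}$, which are absorbed using Lemma \ref{y4.1} (Hardy-type inequalities) at the cost of shrinking the coefficient of the good term from $1$ to, say, $\tfrac12$. The boundary term is precisely what Lemma \ref{y2.2} controls: in Case 1 ($|\alpha|\leq s-1$) it is bounded by $\tfrac1{12}\|(1+y)^{\gamma+\alpha_2+1}\partial_y^2 D^\alpha w\|_{L^2}^2+C\|w\|^2_{H^{s,\gamma}_g}$, and the first piece here is in turn absorbed by the diffusion good term after another application of Lemma \ref{y4.1}; in Cases 2 and 3 ($|\alpha|=s$), Lemma \ref{y2.2} gives the bound with $\tfrac1{12}$ of a good term plus $C_{s,\gamma,\sigma,\delta}(1+\|w\|^2_{H^{s,\gamma}_g})^{s-2}\|w\|^2_{H^{s,\gamma}_g}+C_s\sum_{l=0}^{s/2}\|\partial_t^l(\partial_xP-U)\|^2_{H^{s-2l}(\mathbb{T})}$, which is exactly the last line of (\ref{2.002}).

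There remains the subtle point that for $\alpha=(s,0)$ (excluded here since $\alpha_1\leq s-1$) and more generally that the norm $\|w\|_{H^{s,\gamma}_g}$ on the right-hand side controls all the quantities appearing, not the bare $\sum_{\alpha_1\leq s-1}\|(1+y)^{\gamma+\alpha_2}D^\alpha w\|_{L^2}$; this is handled by invoking Lemma \ref{y4.2} and parts (vi)--(vii) of Lemma \ref{y4.4} to pass freely between $D^\alpha w$, $g_k$, and $u-U$, at the cost of the term $\|\partial_x^s U\|_{L^2(\mathbb{T})}$ (here upgraded to $\|\partial_x^s U\|_{L^\infty(\mathbb{T})}$ where a product estimate forces it). The terms involving $u$, $v$ and $u-U$ generated by the commutators, after using $(\ref{1.3})$ and the representation of $v$, produce the factor $(\|\partial_x^s U\|_{L^\infty(\mathbb{T})}^2+\|w\|^2_{H^{s,\gamma}_g})$ multiplying $\|w\|^2_{H^{s,\gamma}_g}$.

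The main obstacle is the boundary term at $y=0$ in the diffusion integration by parts: for $|\alpha|=s$ the quantity $\partial_y D^\alpha w|_{y=0}$ is not directly controlled by the no-slip data, and one must use the reduction-of-boundary-data identity (\ref{2.005}) of Lemma \ref{y2.1} to re-express all odd-order $y$-derivatives at $y=0$ in terms of $(\partial_t-\epsilon^2\partial_x^2)^l(\partial_xP-U)$ and products $\prod_i D^{\rho^i}w|_{y=0}$ of \emph{strictly lower} total order; the delicate bookkeeping — verifying that the index constraints defining $A^j_{k,l}$ guarantee each factor lands in a space controlled by $\|w\|_{H^{s,\gamma}_g}$ via the trace/embedding Lemmas \ref{y4.3}--\ref{y4.4}, and that the $\epsilon^{2l}$ weights keep the regularizing contributions uniformly bounded — is the technical heart, and is exactly what Lemma \ref{y2.2} packages for us. Granting Lemmas \ref{y2.1}--\ref{y2.2} and \ref{y4.1}--\ref{y4.4}, summing the resulting inequalities over all admissible $\alpha$ and collecting terms yields (\ref{2.002}).
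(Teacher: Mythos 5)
Your proposal follows essentially the same route as the paper's proof: apply $D^{\alpha}$ with $|\alpha|\le s$, $\alpha_1\le s-1$, test with $(1+y)^{2\gamma+2\alpha_2}D^{\alpha}w$, cancel the top-order transport terms via $\partial_x u+\partial_y v=0$, control the $y=0$ boundary integral by Lemmas \ref{y2.1}--\ref{y2.2}, the weight-derivative and $v$-terms by H\"older/Hardy, and the commutators $J_1,J_2,J_3$ by Lemma \ref{y4.4}, then sum over $\alpha$. The only cosmetic deviations are that a single integration by parts in $y$ (not two) produces the diffusion term, that the $\tfrac1{12}\|(1+y)^{\gamma+\alpha_2+1}\partial_y^2D^{\alpha}w\|_{L^2}^2$ piece is absorbed simply by reindexing it as one of the summed good terms $\|(1+y)^{\gamma+\beta_2}\partial_yD^{\beta}w\|_{L^2}^2$ rather than via Lemma \ref{y4.1}, and that in the paper the factor $(1+\|w\|_{H^{s,\gamma}_g})^{s-2}\|w\|^2_{H^{s,\gamma}_g}$ comes from the boundary estimate while the commutators only give cubic terms -- harmless, since those are dominated by the stated right-hand side.
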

\begin{proof}
Applying the operator $D^{\alpha}$ on $(\ref{2.0001})_{1}$ with $\alpha=(\alpha_{1},\alpha_{2}),~|\alpha|\leq s, ~\alpha_{1}\leq s-1$,
\begin{eqnarray}
(\partial _{t} +u\partial _{x} +v\partial _{y}+1-\epsilon^2\partial _{x}^{2}-\partial _{y}^{2})D^{\alpha}w
=- \sum\limits_{0<\beta\leq \alpha}\binom{ \alpha}{\beta} \left\{D^{\beta} u\partial _{x}D^{\alpha-\beta}w + D^{\beta}v\partial _{y}D^{\alpha-\beta}w\right\} .
 \label{2.1}
\end{eqnarray}
Multiplying $(\ref{2.1})$ by $(1+y)^{2\gamma+2\alpha_{2}}D^{\alpha}w$ and integrating it over $\mathbb{T}\times \mathbb{R}_{+}$ yield
\begin{align}
&\frac{1}{2}\frac{d}{dt}\|(1+y)^{ \gamma+ \alpha_{2}}D^{\alpha}w\|_{L^{2}}^{2}+\|(1+y)^{ \gamma+ \alpha_{2}}D^{\alpha}w\|_{L^{2}}^{2}
+\|(1+y)^{ \gamma+ \alpha_{2}}\partial _{y}D^{\alpha}w\|_{L^{2}}^{2}+ \epsilon^2\|(1+y)^{\gamma+ \alpha_{2}}\partial _{x}D^{\alpha}w\|_{L^{2}}^2   \nonumber\\
&=-\int_{\mathbb{T} } D^{\alpha}w\partial _{y}D^{\alpha}wdx | _{y=0}-(2\gamma+2\alpha_{2}) \iint(1+y)^{2\gamma+2\alpha_{2}-1} D^{\alpha}w\partial _{y}   D^{\alpha}w\nonumber\\
&\quad+(\gamma+\alpha_{2}) \iint(1+y)^{2\gamma+2\alpha_{2}-1} v  | D^{\alpha}w|^{2}  \nonumber\\
&\quad - \sum\limits_{0<\beta\leq \alpha}\binom{ \alpha}{\beta} \iint(1+y)^{2\gamma+2\alpha_{2}}D^{\alpha}w  \left\{D^{\beta} u\partial _{x}D^{\alpha-\beta}w + D^{\beta}v\partial _{y}D^{\alpha-\beta}w\right\} . \label{2.2}
\end{align}
Indeed, $(\ref{2.2})$ can be obtained by integration by parts and using the boundary condition.
We need to estimate  $(\ref{2.2})$ term by term. Obviously, the first term on the right-hand side of $(\ref{2.2})$ follows from Lemmas \ref{y2.1}-\ref{y2.2}.

Secondly,  using H$\ddot{o}$lder inequality, we have
\begin{eqnarray}
&&\left|(2\gamma+2\alpha_{2}) \iint(1+y)^{2\gamma+2\alpha_{2}-1} D^{\alpha}w\partial _{y}   D^{\alpha}w  \right|\nonumber \\
 &&\leq (2\gamma+2\alpha_{2}) \|\frac{1}{1+y }  \|_{L^{\infty}} \|(1+y)^{\gamma+ \alpha_{2}}D^{\alpha}w \|_{L^{2}}
  \|(1+y)^{\gamma+ \alpha_{2}}\partial_{y} D^{\alpha}w \|_{L^{2}} \nonumber \\
&&\leq  C_{ s,\gamma } \|w\|_{H^{s,\gamma}_{g}}^2+\frac{1}{4}\|(1+y)^{\gamma+ \alpha_{2}}\partial_{y} D^{\alpha}w \|_{L^{2}}^{2} ,  \label{2.5}
\end{eqnarray}
and using Lemma \ref{y4.4},
\begin{eqnarray}
\left|   (\gamma+\alpha_{2}) \iint(1+y)^{2\gamma+2\alpha_{2}-1} v  | D^{\alpha}w|^{2}  \right|
 &\leq&  \|\frac{v}{1+y}\|_{L^{\infty}}  \|(1+y)^{\gamma+ \alpha_{2}}D^{\alpha}w \|_{L^{2}}^{2} \nonumber \\
& \leq &  C_{s, \gamma,\sigma,\delta  }(\|w\|_{H^{s,\gamma}_{g}}+\|\partial_{x}^{s}U\|_{{L^2}(\mathbb{T})})\|w\|_{H^{s,\gamma}_{g}}^2.  \label{2.6}
\end{eqnarray}
Lastly, noticing the fact $\partial _{y}v=-\partial _{x}u, ~\partial _{y}u=w$, it follows that the last term on the right-hand side of  $(\ref{2.2})$ has the following three cases, \\ (i) the first case:
$$ J_{1}= \iint(1+y)^{2\gamma+2\alpha_{2}}D^{\alpha}w  \partial _{x}^{\eta}vD^{k}w ,\ \   \eta\in [1,s-1].  $$
(a) for $\eta=s-1$, we get
\begin{align}
|J_{1}| &= \left| \iint(1+y)^{2\gamma+2\alpha_{2}}D^{\alpha}w  \partial _{x}^{s-1}v\partial _{y}^{2}w  \right|\nonumber \\
 &\leq   \|(1+y)^{\gamma+ \alpha_{2}}D^{\alpha}w \|_{L^{2}}   \|\frac{\partial _{x}^{s-1}v +y\partial_x^sU} {1+y}\|_{L^{2}}
  \|(1+y)^{\gamma+k_2}D ^{k }w\|_{L^{\infty}}   \nonumber \\
  & \quad+ \|(1+y)^{\gamma+ \alpha_{2}}D^{\alpha}w \|_{L^{2}}   \|\partial_x^s U\|_{{L^\infty}(\mathbb{T})}
  \|(1+y)^{\gamma+k_2} D ^{k }w\|_{L^{\infty}}   \nonumber \\
&\leq  C_{s, \gamma,\sigma,\delta  }\|w\|_{H^{s,\gamma}_{g}} ^2(\|w\|_{H^{s,\gamma}_{g}}+\|\partial_{x}^{s}U\|_{{L^\infty}(\mathbb{T})});
\label{2.7}
\end{align}
(b) for $\eta=1,2,\cdots,s-2$, we derive
\begin{align}
|J_{1}|
&\leq    \|(1+y)^{\gamma+ \alpha_{2}}D^{\alpha}w \|_{L^{2}}   \|\frac{\partial _{x}^{\eta}v } {1+y}\|_{L^{\infty}}
  \|(1+y)^{\gamma+k_{2}}D^{k}w\|_{L^{2}}   \nonumber \\
&\leq  C_{s, \gamma,\sigma,\delta  }\|w\|_{H^{s,\gamma}_{g}}^2 (\|w\|_{H^{s,\gamma}_{g}}+\|\partial_{x}^{s}U\|_{{L^2}(\mathbb{T})}).
\label{2.8}
\end{align}
(ii) the second case£º
$$ J_{2}= \iint(1+y)^{2\gamma+2\alpha_{2}}D^{\alpha}w  \partial _{x}^{\eta}uD^{k}w ,\ \ \eta\in [1,s ].  $$
(a) for $\eta=s $, we get
\begin{align}
|J_{2}|& = \left| \iint(1+y)^{2\gamma+2\alpha_{2}}D^{\alpha}w  \partial _{x}^{s }u\partial _{y} w  \right|\nonumber\\
 &\leq   \|(1+y)^{\gamma+ \alpha_{2}}D^{\alpha}w \|_{L^{2}}   \|\partial _{x}^{s }(u-U)\|_{L^{2}}
  \|(1+y)^{\gamma+k_2} D^k w\|_{L^{\infty}}   \nonumber \\
  &\quad+\|(1+y)^{\gamma+ \alpha_{2}}D^{\alpha}w \|_{L^{2}}   \|\partial _{x}^{s }U \|_{{L^\infty}(\mathbb{T})}
  \|(1+y)^{\gamma+k_2} D^k w\|_{L^{2}}   \nonumber \\
&\leq  C_{s, \gamma,\sigma,\delta  }\|w\|_{H^{s,\gamma}_{g}}^2 (\|w\|_{H^{s,\gamma}_{g}}+\|\partial_{x}^{s}U\|_{{L^\infty}(\mathbb{T})});
\label{2.9}
\end{align}
(b) for $\eta=1,2,\cdots,s-1$, we derive
\begin{align}
|J_{2}|
&\leq   \|(1+y)^{\gamma+ \alpha_{2}}D^{\alpha}w \|_{L^{2}}   \| \partial _{x}^{\eta}u   \|_{L^{\infty}}
  \|(1+y)^{\gamma+k_{2}}D^{k}w\|_{L^{2}}   \nonumber \\
&\leq  C_{s, \gamma,\sigma,\delta  }\|w\|_{H^{s,\gamma}_{g}}^2 (\|w\|_{H^{s,\gamma}_{g}}+\|\partial_{x}^{s}U\|_{{L^2}(\mathbb{T})}).
\label{2.10}
\end{align}
(iii) the last case£º
$$ J_{3}= \iint(1+y)^{2\gamma+2\alpha_{2}}D^{\alpha}w  D^{\theta}w D^{k}w ,\ \ \theta\in [0,s-1 ],  \theta+k=s. $$
We obtain
\begin{align}
|J_{3}|
&\leq  C_{s, \gamma,\sigma,\delta  }\|w\|_{H^{s,\gamma}_{g}} (\|w\|_{H^{s,\gamma}_{g}}+\|\partial_{x}^{s}U\|_{{L^2}(\mathbb{T})})\|(1+y)^{\gamma+ \alpha_{2}}D^{\alpha}w \|_{L^{2}}\nonumber \\
 &\leq C_{s, \gamma,\sigma,\delta  }\|w\|_{H^{s,\gamma}_{g}}^{2}(\|w\|_{H^{s,\gamma}_{g}}+\|\partial_{x}^{s}U\|_{{L^2}(\mathbb{T})})^2+\frac{1}{12} \|(1+y)^{\gamma+ \alpha_{2}}D^{\alpha}w \|_{L^{2}}  ^{2},
\label{2.12}
\end{align}
Hence, combining $(\ref{2.5})$-$(\ref{2.12})$, $(\ref{2.2})$ reduces to inequality (\ref{2.002}).
\end{proof}

\subsection{Estimates on $g_{s}$ }
In this subsection, we will estimate the norm of $D^{\alpha}w$ with weight  $(1+y)^{\gamma+\alpha_{2}}$, here $\alpha=(s, 0)$, i.e., $(1+y)^{\gamma}g_{s}$, because  we have estimated the norm of $(1+y)^{\gamma+\alpha_{2}} D^{\alpha}w$,    $\alpha=(\alpha_{1}, \alpha_{2})$, $\alpha_{1}\leq s-1$ in subsection 2.1.
The evolution equations for $w$ and $u-U$ as follows:
\begin{equation}\left\{
\begin{array}{ll}
(\partial _{t} +u\partial _{x} +v\partial _{y}+1-\partial _{y}^{2}-\epsilon^2\partial _{x}^{2}) w
=0,\\
(\partial _{t} +u\partial _{x} +v\partial _{y}+1-\partial _{y}^{2}-\epsilon^2\partial _{x}^{2}) (u-U)
=-  (u-U)\partial _{x} U  .
\end{array}
 \label{3.01}         \right.\end{equation}

\begin{Proposition}\label{p2.2}
Under the same assumption of Proposition \ref{p2.1}, we have the following estimate:
\begin{align}
&\frac{d}{dt}\|(1+y)^{ \gamma}g_{s}\|_{L^{2}}^{2}+\|(1+y)^{ \gamma}g_{s}\|_{L^{2}}^{2} \nonumber\\
&\leq -\frac{1}{2}\epsilon^2\|(1+y)^{\gamma}\partial _{x}g_{s}\|_{L^{2}} ^{2} -\frac{1}{2}\|(1+y)^{\gamma}\partial _{y}g_{s}\|_{L^{2}} ^{2}+\|w\|_{H^{s,\gamma}_{g}}^2 \nonumber\\
&\quad+ C\|\partial_{x}^{s}(\partial_xP-U)\|_{{L^2}(\mathbb{T})}^2+C_{ \gamma,\delta  }\|\partial_{x}^{s}U\|_{{L^{2}}(\mathbb{T})}^2 \|w\|_{H^{s,\gamma}_{g}}^2\nonumber\\
&\quad+ C_{s, \gamma,\sigma,\delta  }\left(1+\|w\|_{H^{s,\gamma}_{g}}+\|\partial_{x}^{s}U\|_{{L^\infty}(\mathbb{T})}\right)
\left(\|w\|_{H^{s,\gamma}_{g}}+\|\partial_{x}^{s+1}U\|_{{L^\infty}(\mathbb{T})}\right)\|w\|_{H^{s,\gamma}_{g}}.
\label{3.00003}
\end{align}
\end{Proposition}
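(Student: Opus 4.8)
The plan is to derive the evolution equation for $g_s$ from the two equations in $(\ref{3.01})$, multiply by the weight $(1+y)^{2\gamma}g_s$, integrate over $\mathbb{T}\times\mathbb{R}_+$, and carefully collect all the terms, using the nonlinear cancellation that is built into the definition of $g_s=\partial_x^s w-\frac{\partial_y w}{w}\partial_x^s(u-U)$. First I would apply $\partial_x^s$ to both equations in $(\ref{3.01})$; the transport part $(\partial_t+u\partial_x+v\partial_y+1-\partial_y^2-\epsilon^2\partial_x^2)$ applied to $\partial_x^s w$ and to $\partial_x^s(u-U)$ produces commutator terms $\sum_{0<\beta\le(s,0)}\binom{s}{\beta}\{D^\beta u\,\partial_x\partial_x^{s-\beta}w+D^\beta v\,\partial_y\partial_x^{s-\beta}w\}$ (and the corresponding ones for $u-U$). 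The key point, which is the whole reason for introducing $g_s$, is that the most dangerous commutator terms — those containing $\partial_x^s v=-\partial_x^{s-1}\partial_y^{-1}\partial_x u$ i.e. the top $x$-derivative of $v$ paired with $\partial_y w$ — cancel between the $\partial_x^s w$ equation and the $\frac{\partial_y w}{w}\partial_x^s(u-U)$ correction, because $\partial_y(u-U)=w$. I would write out the equation satisfied by $g_s$ in the form $(\partial_t+u\partial_x+v\partial_y+1-\partial_y^2-\epsilon^2\partial_x^2)g_s = \mathcal{R}$, where $\mathcal{R}$ collects all the remaining (lower-order, hence controllable) terms, including those coming from commuting $\partial_y^2$ and $\epsilon^2\partial_x^2$ with the coefficient $\frac{\partial_y w}{w}$, and the inhomogeneity $-\frac{\partial_y w}{w}\partial_x^s((u-U)\partial_x U)$.

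Next I would perform the energy estimate. Multiplying by $(1+y)^{2\gamma}g_s$ and integrating, the term $-\iint(1+y)^{2\gamma}g_s\partial_y^2 g_s$ gives, after integration by parts, the good term $\|(1+y)^\gamma\partial_y g_s\|_{L^2}^2$ plus a weight-derivative term $-2\gamma\iint(1+y)^{2\gamma-1}g_s\partial_y g_s$ handled by Young's inequality (absorbing a $\tfrac14$ of the good term) and a boundary term at $y=0$ of the form $\int_{\mathbb{T}}g_s\partial_y g_s\,dx|_{y=0}$; similarly $-\epsilon^2\iint(1+y)^{2\gamma}g_s\partial_x^2 g_s$ gives the good term $\epsilon^2\|(1+y)^\gamma\partial_x g_s\|_{L^2}^2$ with no boundary term since $x\in\mathbb{T}$. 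The convection term $\iint(1+y)^{2\gamma}g_s(u\partial_x+v\partial_y)g_s$ integrates by parts to $\tfrac12\iint\partial_y((1+y)^{2\gamma}v)|g_s|^2$ (using $\partial_xu+\partial_yv=0$), which is bounded by $\|\frac{v}{1+y}\|_{L^\infty}\|(1+y)^\gamma g_s\|_{L^2}^2$ via Lemma \ref{y4.4}(iii), and the zeroth-order term contributes $+\|(1+y)^\gamma g_s\|_{L^2}^2$ on the left. The boundary term at $y=0$ must be controlled by the trace/reduction argument: $g_s|_{y=0}=\partial_x^s w|_{y=0}-\frac{\partial_y w}{w}\partial_x^s(u-U)|_{y=0}$, and since $u|_{y=0}=0$ while $w|_{y=0}$ need not vanish, one rewrites $\partial_y g_s|_{y=0}$ using the vorticity equation at $y=0$ together with $\partial_y w|_{y=0}=\partial_x P-U$ from Lemma \ref{y2.1}; this is exactly the kind of estimate quantified in Lemma \ref{y2.2}, and it produces the $\frac12\epsilon^2\|(1+y)^\gamma\partial_x g_s\|_{L^2}^2$ and $\frac12\|(1+y)^\gamma\partial_y g_s\|_{L^2}^2$ absorption together with the $C\|\partial_x^s(\partial_x P-U)\|_{L^2(\mathbb{T})}^2$ term on the right-hand side of $(\ref{3.00003})$.

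It then remains to estimate the pairing of $(1+y)^{2\gamma}g_s$ against the remainder $\mathcal{R}$. Here every term is a product of the ``good'' factor $(1+y)^\gamma g_s$ with lower-order quantities controlled by the estimates of Lemma \ref{y4.4}: the commutator terms $D^\beta u\,\partial_x D^{(s,0)-\beta}w$ and $D^\beta v\,\partial_y D^{(s,0)-\beta}w$ with $1\le\beta_1\le s$ are estimated by splitting off the top factor in $L^2$ with weight and the other two in $L^\infty$ (or $L^\infty/L^2$) exactly as in the proof of Proposition \ref{p2.1}, giving bounds of the shape $C_{s,\gamma,\sigma,\delta}\|w\|_{H^{s,\gamma}_g}^2(\|w\|_{H^{s,\gamma}_g}+\|\partial_x^s U\|_{L^\infty(\mathbb{T})})$; the terms from commuting $\partial_y^2$ with $\frac{\partial_y w}{w}$ produce at worst $\frac{\partial_y^2 w}{w}$, $\frac{(\partial_y w)^2}{w^2}$ type factors which are bounded in $L^\infty$ using $(1+y)^\sigma w\ge\delta$ and Lemma \ref{y4.4}(v), contributing the factor $(1+\|w\|_{H^{s,\gamma}_g})^{s-2}$ after expanding the Faà di Bruno type products; and the inhomogeneity involving $\partial_x U$ gives the $\|\partial_x^{s+1}U\|_{L^\infty(\mathbb{T})}$-dependent term. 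Absorbing one more $\frac{1}{12}$-sized piece of $\|(1+y)^\gamma g_s\|_{L^2}^2$ via Young's inequality where needed and using Lemma \ref{p4.1} / Lemma \ref{y4.2} to pass between $\|w\|_{H^{s,\gamma}_g}$ and the individual weighted norms, one arrives at $(\ref{3.00003})$. I expect the main obstacle to be bookkeeping: verifying precisely that the $\partial_x^s v\cdot\partial_y w$ cancellation is exact (so that no genuine loss of $x$-derivative survives) and that, after accounting for the $\epsilon^2\partial_x^2$ terms which are new compared with \cite{[2]}, the surviving $\epsilon$-dependent commutators are either nonnegative (hence movable to the left) or bounded uniformly in $\epsilon$; the boundary term at $y=0$ for $g_s$ is the second delicate point, requiring the reduction-of-boundary-data machinery of Lemmas \ref{y2.1}--\ref{y2.2} applied to the mixed quantity $g_s$ rather than to a pure $D^\alpha w$.
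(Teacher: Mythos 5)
Your overall route coincides with the paper's: the same cancellation (with $a=\partial_y w/w$, the dangerous $\partial_x^s v\,\partial_y w$ term cancels against $a\,\partial_x^s v\,\partial_y u$ because $\partial_y(u-U)=w$), the same weighted $L^2$ energy identity for $g_s$ as in (\ref{3.7}), the boundary contribution handled through $\partial_y w|_{y=0}=\partial_x P-U$ plus a trace inequality, and the remaining commutators estimated with Lemma \ref{y4.4}. One small remark: for the boundary term you do not need the full induction machinery of Lemmas \ref{y2.1}--\ref{y2.2}; a direct differentiation of $g_s$ gives $\partial_y g_s|_{y=0}=\partial_x^s(\partial_x P-U)-a g_s+\tfrac{\partial_y^2 w}{w}\partial_x^s U$ at $y=0$ (see (\ref{3.10})), after which the simple trace estimate suffices, and it absorbs only a fraction of $\|(1+y)^\gamma\partial_y g_s\|_{L^2}^2$ — it has nothing to do with the $\epsilon^2$ dissipation, since the $x$-direction is periodic and $-\epsilon^2\partial_x^2$ produces no boundary term.

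The genuine gap is your treatment of the $\epsilon$-dependent commutator created by pulling $-\epsilon^2\partial_x^2$ through the coefficient $a$ in $g_s=\partial_x^s w-a\,\partial_x^s(u-U)$. The surviving term is, up to constants, $\epsilon^2\,\partial_x a\,\bigl\{\partial_x^{s+1}(u-U)-\tfrac{\partial_x w}{w}\,\partial_x^{s}(u-U)\bigr\}$ (the first term on the right of (\ref{3.7})). This is \emph{not} lower order: it carries $s+1$ tangential derivatives of $u-U$, so it is neither controlled by $\|w\|_{H^{s,\gamma}_g}$ through Lemma \ref{y4.4} nor ``bounded uniformly in $\epsilon$'' as your sketch asserts, and it cannot be delegated to the boundary estimate where you placed the $\tfrac12\epsilon^2$ absorption. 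The paper's resolution is specific: using the pointwise bounds encoded in $H^{s,\gamma}_{\sigma,\delta}$ (e.g. $\|(1+y)\partial_x a\|_{L^\infty}\le\delta^{-2}+\delta^{-4}$) together with the Hardy-type inequality of Lemma \ref{y4.1}, one bounds $\|(1+y)^{\gamma-1}\partial_x^{s+1}(u-U)\|_{L^2}$ by $\|(1+y)^{\gamma}\partial_x g_s\|_{L^2}$ plus quantities controlled by $\|w\|_{H^{s,\gamma}_g}+\|\partial_x^{s+1}U\|_{L^2(\mathbb{T})}$, and then absorbs the product into one half of the dissipation $\epsilon^2\|(1+y)^\gamma\partial_x g_s\|_{L^2}^2$ — which is exactly why only $-\tfrac12\epsilon^2\|(1+y)^\gamma\partial_x g_s\|_{L^2}^2$ survives in (\ref{3.00003}). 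Without this step (or an equivalent one) your scheme stalls at that term, because every other tool you invoke caps at $s$ derivatives of $u$.
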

\begin{proof}
Applying the operator $\partial_{x}^{s}$ on $(\ref{3.01})_{1}$ and $(\ref{3.01})_{2}$ respectively, we obtain the following  equations
\begin{equation}\left\{
\begin{array}{ll}
(\partial _{t} +u\partial _{x} +v\partial _{y}+1-\partial _{y}^{2}-\epsilon^2\partial _{x}^{2})\partial_{x}^{s}w+\partial_{x}^{s}v\partial _{y}w
=- \sum\limits_{0\leq j<s}\binom{s}{j} \partial _{x}^{s-j}u\partial _{x}^{j+1}w \\
\quad- \sum\limits_{1\leq j<s}\binom{s}{j} \partial _{x}^{s-j}v\partial _{y}\partial _{x}^{j }w  ,\\
(\partial _{t} +u\partial _{x} +v\partial _{y}+1-\partial _{y}^{2}-\epsilon^2\partial _{x}^{2})\partial_{x}^{s}(u-U)+\partial_{x}^{s}v\partial _{y}u
=- \sum\limits_{0\leq j<s}\binom{s}{j} \partial _{x}^{s-j}u\partial _{x}^{j+1}(u-U) \\
\quad- \sum\limits_{1\leq j<s}\binom{s}{j} \partial _{x}^{s-j}v\partial _{y}\partial _{x}^{j }u- \sum\limits_{0\leq j\leq s}\binom{s}{j} \partial _{x}^{j}(u-U)\partial _{x}^{s-j+1}U.
 \end{array}  \label{3.1}
\right.\end{equation}
To overcome the difficult term  $\partial_{x}^{s}v$, we use the cancellation property as usual in \cite{[2]}. Indeed,  we  subtract $\frac{\partial _{y}w}{w}\times (\ref{3.1})_{2}$ from $ (\ref{3.1})_{1}$, and letting  $a(t,x,y)=\frac{\partial _{y}w}{w}$, it follows
\begin{align}
&
 (\partial _{t} +u\partial _{x} +v\partial _{y}+1-\partial _{y}^{2})g_{s} +\partial_{x}^{s}(u-U) (\partial _{t} +u\partial _{x} +v\partial _{y}+1-\partial _{y}^{2})a    \nonumber\\
&= 2\epsilon^2\partial_x^{s+1}(u-U)\partial_x a+2\partial_{x}^{s}w\partial _{y}a - \sum\limits_{j=0}^{s-1}\binom{s}{j} g_{j+1}\partial _{x}^{s-j}u
- \sum\limits_{j=1}^{s-1}\binom{s}{j}\partial _{x}^{s-j}v \{\partial _{y}\partial _{x}^{j }w-a \partial _{y}\partial _{x}^{j }u \}\nonumber\\
&\quad+a\sum\limits_{j=0}^s\binom{s}{j} \partial _{x}^{j}(u-U)\partial _{x}^{s-j+1}U . \label{3.2}
\end{align}
Applying the operator $\partial_{y}$ on $(\ref{2.0002})_{1}$ yields
\begin{eqnarray}
(\partial _{t} +u\partial _{x} +v\partial _{y}+1 )\partial _{y}w=2\epsilon^2\partial_x\partial_y w+\partial _{y}^{3}w-w\partial _{x}w+\partial _{x}u\partial _{y}w.  \label{3.3}
\end{eqnarray}
Using $(\ref{3.3})$ and equation $(\ref{2.0002})_{1}$, we get
\begin{align}
 (\partial _{t} +u\partial _{x} +v\partial _{y}+1 )a=&\frac{(\partial _{t} +u\partial _{x} +v\partial _{y}+1 )\partial _{y}w}{w} -
\frac{\partial _{y}w(\partial _{t} +u\partial _{x} +v\partial _{y}+1 ) w}{w^{2}}   \nonumber\\
=&\epsilon^2\frac{\partial_x^2\partial_y w}{w}-\epsilon^2a\frac{\partial_x^2w}{w}+\frac{\partial _{y}^{3}w}{w}-a\frac{\partial _{y}^{2}w}{w}-  g_1+a \partial _{x}U,
\label{3.4}
\end{align}
and
\begin{equation}\left\{
\begin{array}{ll}
  \partial _{y}a=\frac{ \partial _{y}^{2}w}{w} -\frac{\partial _{y}w \partial _{y} w}{w^{2}},   \\
 \partial _{y}^{2}a=\frac{\partial _{y}^{3}w}{w}-a\frac{\partial _{y}^{2}w}{w}- 2a \partial _{y}a .
\end{array}
 \label{3.5}         \right.\end{equation}
Now inserting $(\ref{3.4})$ and $(\ref{3.5})_{2}$ into $(\ref{3.2})$ yields
\begin{align}
&
 (\partial _{t} +u\partial _{x} +v\partial _{y}+1-\partial _{y}^{2}-\epsilon^2 \partial_x^2)g_{s}  \nonumber\\
&= 2\epsilon^2\big\{\partial_x^{x+1}(u-U)-\frac{\partial_xw}{w}\partial_x^2(u-U)\big\}\partial_x a+2g_{s}\partial _{y}a -g_1\partial _{x}^{s}U- \sum\limits_{j=1}^{s-1}\binom{s}{j} g_{j+1}\partial _{x}^{s-j}u
\nonumber\\
&\quad- \sum\limits_{j=1}^{s-1}\binom{s}{j}\partial _{x}^{s-j}v \{\partial _{y}\partial _{x}^{j }w-a \partial _{y}\partial _{x}^{j }u \}+a\sum\limits_{j=0}^{s-1}\binom{s}{j} \partial _{x}^{j}(u-U)\partial _{x}^{s-j+1}U . \label{3.6}
\end{align}
 Multiplying $(\ref{3.6})$ by $(1+y)^{2\gamma}g_{s}$, and integrating it over $\mathbb{T}\times \mathbb{R}_{+}$, we arrive at
\begin{align}
&\frac{1}{2}\frac{d}{dt}\|(1+y)^{ \gamma}g_{s}\|_{L^{2}}^{2}+\|(1+y)^{ \gamma}g_{s}\|_{L^{2}}^{2}+\|(1+y)^{\gamma}\partial _{y}g_{s}\|_{L^{2}} ^{2}+\epsilon^2\|(1+y)^{\gamma}\partial _{x}g_{s}\|_{L^{2}} ^{2} \nonumber \\
&=2\epsilon^2\iint (1+y)^{2\gamma}g_{s}\big\{\partial_x^{x+1}(u-U)-\frac{\partial_xw}{w}\partial_x^2(u-U)\big\}\partial_x a\nonumber\\
&\quad+\int_{\mathbb{T} } g_{s}\partial _{y}g_{s}dx | _{y=0}-2\gamma \iint(1+y)^{2\gamma-1}g_{s}\partial _{y}  g_{s}
+\gamma \iint(1+y)^{2\gamma-1} v  |g_{s}|^{2}  + 2\iint(1+y)^{2\gamma}|g_{s}|^{2}\partial _{y}a\nonumber\\
&\quad - \sum\limits_{j=1}^{s-1}\binom{s}{j} \iint(1+y)^{2\gamma}g_{s} g_{j+1}\partial _{x}^{s-j}u
- \sum\limits_{j=1}^{s-1}\binom{s}{j}\iint(1+y)^{2\gamma}g_{s}\partial _{x}^{s-j}v \{\partial _{y}\partial _{x}^{j }w-a \partial _{x}^{j }w \}\nonumber\\
&\quad-\iint(1+y)^{2\gamma}g_{s}g_1\partial _{x}^{s}U+\sum\limits_{j=0}^{s-1}\binom{s}{j}\iint(1+y)^{2\gamma}g_{s}a \partial _{x}^{j}(u-U)\partial _{x}^{s-j+1}U, \label{3.7}
\end{align}
which can be obtained by integration by parts and using the boundary condition.
We estimate  $(\ref{3.7})$ term by term. Firstly $w \in C([0,T];H^{s+4,\gamma}_{\sigma,\delta})$, it follows from the  definition of $H^{s+4,\gamma}_{\sigma,\delta}$ that $(1+y)^\sigma w \geq \delta$ and $|(1+y)^{\sigma+\alpha}D^\alpha w|\leq \delta^{-1}$ for all $|\alpha| \leq 2$. Thus, we have $\|(1+y)\partial_x a\|_{L^\infty} \leq \delta^{-2}+\delta^{-4}$ and $\|\frac{\partial_x w}{w}\|_{L^\infty} \leq \delta^{-2}$, and hence
\begin{align}
&2\epsilon^2\iint (1+y)^{2\gamma}g_{s}\big\{\partial_x^{x+1}(u-U)-\frac{\partial_xw}{w}\partial_x^2(u-U)\big\}\partial_x a\nonumber\\
&\leq2\epsilon^2 C_{\delta}\|(1+y)^{\gamma}g_s\|_{L^2}(\|(1+y)^{\gamma-1}\partial_x^{s+1}(u-U)\|_{L^2}+\|(1+y)^{\gamma-1}\partial_x^s(u-U)\|_{L^2}).
\end{align}
In addition, by using Lemma \ref{y4.1} and the following estimate
\begin{equation}\left\{
\begin{array}{ll}
 |w|^{-1}\leq \delta^{-1}(1+y)^{\sigma},   \\
 |\partial _{y}  w| \leq \delta^{-1}(1+y)^{-\sigma-1},  \ \
| \partial _{y}^{2}w| \leq \delta^{-1}(1+y)^{-\sigma-2},
\end{array}
 \label{2.60}         \right.\end{equation}
we have
\begin{align}
\|(1+y)^{\gamma-1}\partial_x^{s+1}(u-U)\|_{L^2} &\leq C_{\gamma, \sigma, \delta}\|(1+y)^{\gamma-\sigma-1}\frac{\partial_x^{s+1}(u-U)}{w}\|_{L^2}\nonumber\\
&\leq C_{\gamma, \sigma, \delta}\left(\|\partial_x^{s+1}U\|_{L^{2}(\mathbb{T})}+\|(1+y)^{\gamma}\partial_x g_s\|_{L^2}+\|(1+y)^{\gamma-1}\partial_x^{s} (u-U)\|_{L^2}\right),
\end{align}
and
\begin{eqnarray}
\begin{aligned}
\|(1+y)^{\gamma-1}\frac{\partial_x w}{w}\partial_x^{s}(u-U)\|_{L^2} \leq C_{\gamma, \sigma, \delta}\|(1+y)^{\gamma-1}\partial_x^{s} (u-U)\|_{L^2}.
\end{aligned}
\end{eqnarray}
Then, we can obtain
\begin{align}
&\bigg|2\epsilon^2\iint (1+y)^{2\gamma}g_{s}\big\{\partial_x^{x+1}(u-U)-\frac{\partial_xw}{w}\partial_x^2(u-U)\big\}\partial_x a\bigg|\nonumber\\
& \leq\frac{1}{2} \epsilon^2 \|(1+y)^\gamma \partial_x g_s\|_{L^2}^2 + \epsilon^2 C_{s,\gamma, \sigma, \delta}\left(\|w\|_{H^{s,\gamma}_g}+\|\partial_x^{s+1}U\|_{L^2(\mathbb{T})}\right)\|w\|_{H^{s,\gamma}_g}^2.
\end{align}
Secondly,  we have the fact that
\begin{align}
\partial _{y}g_{s} \big{|} _{y=0}&=\left(\partial_{x}^{s}\partial _{y}w-\frac{\partial _{y}w}{w} \partial_{x}^{s}w -\frac{\partial^{2} _{y}w}{w} \partial_{x}^{s}(u-U) +\frac{\partial_{y}w\partial_{y}w}{w^{2}} \partial_{x}^{s}(u-U) \right) \big{|} _{y=0} \nonumber\\
&=\left(\partial_{x}^{s}(\partial _{x}p-U)-ag_{s} +\frac{\partial^{2} _{y}w}{w} \partial_{x}^{s}U\right) \big{|} _{y=0}.  \label{3.10}
\end{align}
Then according to the trace theorem, we get
\begin{eqnarray}
&&\left|\int_{\mathbb{T} } g_{s}\partial _{y}g_{s}dx \big{|} _{y=0}  \right|\nonumber \\
 &&\leq \iint \left| ag_{s}^{2} \right|dxdy+\iint \left| \partial _{y}ag_{s}^{2} \right|dxdy+ 2\iint \left| ag_{s}\cdot \partial _{y} g_{s}  \right|dxdy
  \nonumber \\
   &&\quad+\iint (\partial _{y}g_{s}+g_{s})\partial_{x}^{s}(\partial _{x}p-U)dxdy+\iint g_s(\frac{\partial^{2} _{y}w}{w}+\frac{\partial^{3} _{y}w}{w}-\frac{\partial^{2} _{y}w\partial _{y}w}{w^2})\partial_{x}^{s}Udxdy
  \nonumber \\
  &&\quad+\iint \partial_y g_s\frac{\partial^{2} _{y}w}{w}\partial_{x}^{s}Udxdy
  \nonumber \\
&& \leq  C_{ \gamma,\sigma } \|w\|_{H^{s,\gamma}_{g}}^2(1+\|\partial_{x}^{s}U\|_{{L^{\infty}}(\mathbb{T})}^2)
+C\|\partial_{x}^{s}(\partial _{x}p-U)\|_{{L^{2}}(\mathbb{T})}^2
+\frac{1}{4}\|(1+y)^{\gamma}\partial_{y} g_s\|_{L^{2}}^{2} ,  \label{3.11}
\end{eqnarray}
which, together with $(\ref{2.60})$, gives the facts that $\|a\|_{L^{\infty}}\leq \delta^{-2}$ and $\|\partial _{y}a\|_{L^{\infty}}\leq \delta^{-2}+\delta^{-4}$.
Next, using H$\ddot{o}$lder inequality,  we obtain
\begin{align}
\left| 2\gamma \iint(1+y)^{2\gamma-1}g_{s}\partial _{y}  g_{s}  \right|
 &\leq 2\gamma \|\frac{1}{1+y }  \|_{L^{\infty}} \|(1+y)^{\gamma} g_{s}\|_{L^{2}} \|(1+y)^{\gamma}\partial _{y}g_{s}\|_{L^{2}} \nonumber \\
& \leq  C_{ \gamma }  \|w\|_{H^{s,\gamma}_g}^2+\frac{1}{4}\|(1+y)^{\gamma}\partial_{y} g_{s}\|_{L^{2}}^{2} ,  \label{3.12}
\end{align}
and using Lemma \ref{y4.4}
\begin{align}
\left|   \gamma \iint(1+y)^{2\gamma-1} v  |g_{s}|^{2} \right|
& \leq \|\frac{v}{1+y}\|_{L^{\infty}}\|(1+y)^{ \gamma}g_{s}\|_{L^{2}}^{2}\nonumber\\
&\leq  C_{s, \gamma,\sigma,\delta  }(\|w\|_{H^{s,\gamma}_{g}}+\|\partial_{x}^{s}U\|_{{L^2}(\mathbb{T})}) \|w\|_{H^{s,\gamma}_g}^{2}.  \label{3.13}
\end{align}
It follows from to $(\ref{2.60})$ and $(\ref{3.5})$ that
\begin{eqnarray}
\left| 2\iint(1+y)^{2\gamma}|g_{s}|^{2}\partial _{y}a  \right|
  \leq  2  \|\partial _{y}a  \|_{L^{\infty}} \|(1+y)^{\gamma} g_{s}\|_{L^{2}} ^{2}\leq C_{ \delta }  \|w\|_{H^{s,\gamma}_g}^{2},
\label{3.14}
\end{eqnarray}
and
\begin{align}
\left|    \sum\limits_{j=1}^{s-1}\binom{s}{j} \iint(1+y)^{2\gamma}g_{s} g_{j+1}\partial _{x}^{s-j}u  \right|
& \leq   \sum\limits_{j=1}^{s-1}\binom{s}{j}     \|\partial _{x}^{s-j}u  \|_{L^{\infty}} \|(1+y)^{\gamma} g_{s}\|_{L^{2}}
   \|(1+y)^{\gamma} g_{j+1}\|_{L^{2}}\nonumber \\
& \leq   C_{s, \gamma,\sigma,\delta  }(\|w\|_{H^{s,\gamma}_{g}}+\|\partial_{x}^{s}U\|_{{L^2}(\mathbb{T})})^2\|w\|_{H^{s,\gamma}_{g}}.
\label{3.15}
\end{align}
Lastly, for $j=2,3,\cdots, s-1$,
\begin{eqnarray}
&& \left|   \sum\limits_{j=1}^{s-1}\binom{s}{j}\iint(1+y)^{2\gamma}g_{s}\partial _{x}^{s-j}v \{\partial _{y}\partial _{x}^{j }w-a \partial _{x}^{j }w \}   \right|
 \nonumber \\
&& \leq    \sum\limits_{j=1}^{s-1}\binom{s}{j}  \|(1+y)^{\gamma} g_{s}\|_{L^{2}} \|\frac{\partial _{x}^{s-j}v } {1+y}\|_{L^{\infty}}
 \left(  \|(1+y)^{\gamma+1} \partial _{y}\partial _{x}^{j }w\|_{L^{2}}+ \|(1+y)a  \|_{L^{\infty}} \|(1+y)^{\gamma} \partial _{x}^{j }w\|_{L^{2}} \right)  \nonumber \\
&& \leq C_{s, \gamma,\sigma,\delta  }(\|w\|_{H^{s,\gamma}_{g}}+\|\partial_{x}^{s}U\|_{{L^2}(\mathbb{T})})\|w\|_{H^{s,\gamma}_{g}}^2,
\label{3.16}
\end{eqnarray}
which, along with $(\ref{2.60})$, gives the fact that $\|(1+y) a \|_{L^{\infty}} \leq \delta^{-2}$.  And for $j=1$,
\begin{eqnarray}
&& \left|  \iint(1+y)^{2\gamma}g_{s}\partial _{x}^{s-1}v \{\partial _{y}\partial _{x} w-a \partial _{x} w \}   \right|
 \nonumber \\
&& \leq    \|\frac{\partial _{x}^{s-1}v +y\partial_x^s U} {1+y}\|_{L^{2}} \left(  \|(1+y)^{\gamma+1} \partial _{y}\partial _{x}w\|_{L^{\infty}}+ \|(1+y)a  \|_{L^{\infty}} \|(1+y)^{\gamma} \partial _{x} w\|_{L^{\infty}} \right)\|(1+y)^{\gamma} g_{s}\|_{L^{2}}
   \nonumber \\
&& \quad+   \|\partial_x^s U\|_{L^{\infty}(\mathbb T)}\left(  \|(1+y)^{\gamma+1} \partial _{y}\partial _{x}w\|_{L^{2}}+ \|(1+y)a  \|_{L^{\infty}} \|(1+y)^{\gamma} \partial _{x} w\|_{L^{2}} \right)\|(1+y)^{\gamma} g_{s}\|_{L^{2}}\nonumber \\
&& \leq C_{s, \gamma,\sigma,\delta  }\|w\|_{H^{s,\gamma}_{g}}^2(\|w\|_{H^{s,\gamma}_{g}}+\|\partial_{x}^{s}U\|_{{L^\infty}(\mathbb{T})}).
\label{3.17}
\end{eqnarray}
For the term $| \iint(1+y)^{2\gamma}g_{s}g_1\partial _{x}^{s}U| $, we conclude
\begin{eqnarray}
&&\left| \iint(1+y)^{2\gamma}g_{s}g_1\partial _{x}^{s}U   \right|\nonumber \\
&&\leq \|(1+y)^{\gamma} g_{s}\|_{L^{2}}\|(1+y)^{\gamma} g_{1}\|_{L^{2}}\|\partial_{x}^{s}U\|_{{L^\infty}(\mathbb{T})}\nonumber \\
&& \leq C_{s, \gamma,\sigma,\delta  }\|\partial_{x}^{s}U\|_{{L^\infty}(\mathbb{T})}(\|w\|_{H^{s,\gamma}_{g}}+\|\partial_{x}^{s}U\|_{{L^2}(\mathbb{T})})\|w\|_{H^{s,\gamma}_{g}} .
\end{eqnarray}
For last term, we have
\begin{eqnarray}
&&\left|\iint(1+y)^{2\gamma}g_{s}a \partial _{x}^{j}(u-U)\partial _{x}^{s-j+1}U  \right|\nonumber \\
&&\leq \|(1+y)^{\gamma} g_{s}\|_{L^{2}}\|(1+\gamma)a\|_{L^{\infty}}\|(1+y)^{\gamma-1} \partial _{x}^{j}(u-U)\|_{L^{2}}\|\partial_{x}^{s-j+1}U\|_{{L^\infty}(\mathbb{T})}\nonumber \\
&& \leq C_{s, \gamma,\sigma,\delta  }\|\partial_{x}^{s+1}U\|_{{L^\infty}(\mathbb{T})}(\|w\|_{H^{s,\gamma}_{g}}+\|\partial_{x}^{s}U\|_{{L^2}(\mathbb{T})})\|w\|_{H^{s,\gamma}_{g}}.
\end{eqnarray}
Hence combining $(\ref{3.11})$-$(\ref{3.17})$ and $(\ref{3.7})$ leads to (\ref{3.00003}).
\end{proof}
\subsection{Weighted $H^s$ estimate on $w$ }
Now, we can derive the weighted $H^s$ estimate on $w$ by employing Proposition \ref{p2.1} and Proposition \ref{p2.2}.
\begin{Proposition}\label{p2.3}
Under the same assumption of Proposition \ref{p2.1}, we have the following estimate
\begin{align}
\|w\|_{H^{s,\gamma}_{g}}^2 &\leq \left\{\|w_0\|_{H^{s,\gamma}_{g}}^2+\int_0^t F(\tau)d \tau\right\}\times \left\{1-C(\frac{s}{2}-1)\left(\|w_0\|_{H^{s,\gamma}_{g}}^2+\int_0^t F(\tau) d\tau\right)^{\frac{s-2}{2}}t\right\}^{-\frac{2}{s-2}},
\label{2.011}
\end{align}
provided that
\begin{eqnarray*}
1-C(\frac{s}{2}-1)\left(\|w_0\|_{H^{s,\gamma}_{g}}^2+\int_0^t F(\tau) d\tau\right)^{\frac{s-2}{2}},\;\; t > 0,
\end{eqnarray*}
where $C$ is a constant independent of $\epsilon$ and $t$. The function $F(t)$ is expressed by
\begin{eqnarray}
\begin{aligned}
F(t)=\mathcal{P}(\|\partial_{x}^{s+1}U\|_{{L^\infty}(\mathbb{T})})+C_s \sum\limits_{l=0}^{\frac{s}{2}}\|\partial^{l}_t(\partial_xP-U)\|_{H^{s-2l}(\mathbb{T})}^{2},
\end{aligned}
\end{eqnarray}
and $\mathcal{P}(\cdot)$ denotes a polynomial.
\begin{proof}
According to Proposition \ref{p2.1} and Proposition \ref{p2.2}, we infer from the definition of $\|\cdot\|_{H^{s,\gamma}_g}$ that
\begin{align}
\frac{d}{dt} \|w\|_{H^{s,\gamma}_{g}}^2 &\leq C_{s, \gamma,\sigma,\delta  }\left(1+\|w\|_{H^{s,\gamma}_{g}}+\|\partial_{x}^{s}U\|_{{L^\infty}(\mathbb{T})}\right)
\left(\|w\|_{H^{s,\gamma}_{g}}+\|\partial_{x}^{s+1}U\|_{{L^\infty}(\mathbb{T})}\right)\|w\|_{H^{s,\gamma}_{g}}\nonumber\\
&\quad+C_{ \gamma,\delta  }\|\partial_{x}^{s}U\|_{{L^{\infty}}(\mathbb{T})}^2 \|w\|_{H^{s,\gamma}_{g}}^2+C_{s,\gamma,\sigma,\delta}(1+\|w\|_{H_g^{s,\gamma}})^{s-2}\|w\|^2_{H^{s,\gamma}_g}\nonumber\\
&\quad+C_s \sum\limits_{l=0}^{\frac{s}{2}}\|\partial^{l}_t(\partial_xP-U)\|_{H^{s-2l}(\mathbb{T})}^{2}\nonumber\\
&\leq C_{s,\gamma,\sigma,\delta}\|w\|_{H_g^{s,\gamma}}^{s}+\mathcal{P}(\|\partial_{x}^{s+1}U\|_{{L^\infty}(\mathbb{T})})+C_s \sum\limits_{l=0}^{\frac{s}{2}}\|\partial^{l}_t(\partial_xP-U)\|_{H^{s-2l}(\mathbb{T})}^{2},
\end{align}
and hence, it follows by using the comparison principle of ordinary differential equations that
\begin{eqnarray}
\begin{aligned}
\|w\|_{H^{s,\gamma}_{g}}^2 &\leq \left\{\|w_0\|_{H^{s,\gamma}_{g}}^2+\int_0^t F(\tau)d \tau\right\}\\
&\quad \times \left\{1-C(\frac{s}{2}-1)\left(\|w_0\|_{H^{s,\gamma}_{g}}^2+\int_0^t F(\tau) d\tau\right)^{\frac{s-2}{2}}t\right\}^{-\frac{2}{s-2}}.
\end{aligned}
\end{eqnarray}
This  hence proves Proposition \ref{p2.3}.
\end{proof}
\end{Proposition}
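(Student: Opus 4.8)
The plan is to combine the two differential inequalities obtained in Propositions \ref{p2.1} and \ref{p2.2} into a single closed inequality for $\|w\|_{H^{s,\gamma}_{g}}^2$, and then integrate it via a nonlinear comparison argument. First I would add the estimate of Proposition \ref{p2.1} (summed over all $|\alpha|\le s$ with $\alpha_1\le s-1$) to the estimate of Proposition \ref{p2.2} for the top term $(1+y)^\gamma g_s$. By the very definition of $\|w\|_{H^{s,\gamma}_{g}}^2=\|(1+y)^\gamma g_s\|_{L^2}^2+\sum_{|\alpha|\le s,\,\alpha_1\le s-1}\|(1+y)^{\gamma+\alpha_2}D^\alpha w\|_{L^2}^2$, the left-hand sides combine to $\tfrac12\tfrac{d}{dt}\|w\|_{H^{s,\gamma}_{g}}^2$ plus a nonnegative zeroth-order term; on the right-hand side the negative dissipation terms $-\tfrac12\sum\|(1+y)^{\gamma+\alpha_2}\partial_y D^\alpha w\|_{L^2}^2$, $-\tfrac12\|(1+y)^\gamma\partial_y g_s\|_{L^2}^2$ and the $\epsilon^2$-terms are simply discarded (they are $\le 0$), while the ``small'' remainder $\|w\|_{H^{s,\gamma}_g}^2$ appearing in \eqref{3.00003} is absorbed into the $C_{s,\gamma,\sigma,\delta}(1+\|w\|_{H^{s,\gamma}_g})^{s-2}\|w\|^2_{H^{s,\gamma}_g}$ term (using $1\le(1+\|w\|_{H^{s,\gamma}_g})^{s-2}$ since $s\ge4$). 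One must also recall from Bernoulli's law \eqref{1.8} that $\|\partial_x^s U\|_{L^\infty(\mathbb T)}$ and $\|\partial_x^{s+1}U\|_{L^\infty(\mathbb T)}$ are controlled by the hypotheses on $U$, so every term of the form $\|w\|_{H^{s,\gamma}_g}^2(\|\partial_x^s U\|^2+\|w\|_{H^{s,\gamma}_g}^2)$, $\|w\|_{H^{s,\gamma}_g}^2(\|w\|_{H^{s,\gamma}_g}+\|\partial_x^{s+1}U\|_{L^\infty})$, etc., is bounded by a polynomial in $\|w\|_{H^{s,\gamma}_g}$ whose top-degree term is $C_{s,\gamma,\sigma,\delta}\|w\|_{H^{s,\gamma}_g}^s$ and whose coefficients are controlled by $\mathcal P(\|\partial_x^{s+1}U\|_{L^\infty(\mathbb T)})$ plus the pressure term $C_s\sum_{l=0}^{s/2}\|\partial_t^l(\partial_x P-U)\|_{H^{s-2l}(\mathbb T)}^2$.

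Having arranged this, I would invoke Young's inequality to bound every mixed term $\|w\|_{H^{s,\gamma}_g}^{k}\cdot(\text{lower-order factor})$ by $C\|w\|_{H^{s,\gamma}_g}^{s}+F(t)$ for $1\le k\le s-1$, where $F(t)=\mathcal P(\|\partial_x^{s+1}U\|_{L^\infty(\mathbb T)})+C_s\sum_{l=0}^{s/2}\|\partial_t^l(\partial_x P-U)\|_{H^{s-2l}(\mathbb T)}^2$ exactly as in the statement. This reduces the system to the scalar inequality
\begin{eqnarray*}
\frac{d}{dt}\|w\|_{H^{s,\gamma}_{g}}^2 \le C_{s,\gamma,\sigma,\delta}\,\|w\|_{H^{s,\gamma}_{g}}^{s}+F(t).
\end{eqnarray*}
(One should double-check the combinatorics: the worst nonlinearity in both propositions is cubic in $w$-type quantities with one factor in $L^\infty$ and two in $L^2$, and after applying the embedding Lemma \ref{y4.4} the resulting power is at most $\|w\|_{H^{s,\gamma}_g}^3$; the term $(1+\|w\|_{H^{s,\gamma}_g})^{s-2}\|w\|_{H^{s,\gamma}_g}^2$ from Lemma \ref{y2.2} is genuinely of degree $s$, which is why $s$ — rather than $3$ — is the exponent in the final bound.)

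Finally I would solve this Riccati-type inequality by comparison with the ODE $\tfrac{d}{dt}y=C_{s,\gamma,\sigma,\delta}\,y^{s/2}\cdot y^{s/2}\big/y^{s/2-1}$... more precisely, set $y(t):=\|w\|_{H^{s,\gamma}_g}^2$ and $m(t):=\|w_0\|_{H^{s,\gamma}_g}^2+\int_0^t F(\tau)\,d\tau$, so that $y'\le C_{s,\gamma,\sigma,\delta}\,y^{s/2}+F(t)$ and hence $(y-m)'\le C_{s,\gamma,\sigma,\delta}\,y^{s/2}$. A standard comparison lemma for ODEs (monotonicity of solutions in the data, applied to $z'=C_{s,\gamma,\sigma,\delta}(z+m(t))^{s/2}$ or directly to $z'=C_{s,\gamma,\sigma,\delta}z^{s/2}$ after noting $y\le z$ with $z(0)=m(0)$ and $z$ absorbing the forcing through $m$) gives
\begin{eqnarray*}
\|w\|_{H^{s,\gamma}_{g}}^2 \le m(t)\Big\{1-C\big(\tfrac{s}{2}-1\big)\,m(t)^{\frac{s-2}{2}}\,t\Big\}^{-\frac{2}{s-2}},
\end{eqnarray*}
valid on the time interval where the bracket stays positive, which is precisely \eqref{2.011}. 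The main obstacle — and the step deserving the most care — is the bookkeeping in the first paragraph: making sure that after discarding the dissipative terms, \emph{every} right-hand-side contribution of Propositions \ref{p2.1} and \ref{p2.2} (in particular the boundary terms routed through Lemmas \ref{y2.1}--\ref{y2.2}, which carry the high power $(1+\|w\|_{H^{s,\gamma}_g})^{s-2}$ and the pressure-time norms) is genuinely dominated by $C_{s,\gamma,\sigma,\delta}\|w\|_{H^{s,\gamma}_g}^{s}+F(t)$ with the correct $F$; once that is verified the ODE comparison is routine.
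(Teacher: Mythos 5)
Your proposal is correct and follows essentially the same route as the paper: sum the estimates of Propositions \ref{p2.1} and \ref{p2.2}, discard the nonpositive dissipation and $\epsilon^2$ terms, absorb all remaining right-hand-side contributions into $C_{s,\gamma,\sigma,\delta}\|w\|_{H^{s,\gamma}_g}^{s}+F(t)$, and close with the comparison principle for the resulting Riccati-type ODE. Your extra bookkeeping (Young's inequality for the mixed terms and the explicit supersolution comparison with $m(t)=\|w_0\|_{H^{s,\gamma}_g}^2+\int_0^t F$) only fills in details the paper leaves implicit.
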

\subsection{Weighted $L^{\infty}$ estimates on lower order terms  }\label{su.4}
In this subsection, we will estimate the weighted $L^{\infty}$ on $D^{\alpha} w$ for $|\alpha|\leq 2$ by using the classical maximum principles. More precisely, we will derive two parts: an $L^\infty$-estimate on  $I:=\sum \limits_{|\alpha|\leq 2} |(1+y)^{\gamma+\alpha_{2}}D^{\alpha}w|_{L^{2}}$ and a lower bound estimate on $B_{(0,0)}:=(1+y)^{\sigma} w$.
\begin{Lemma}\label{y2.3}
Under the same assumption of Proposition \ref{p2.1}, we have the following estimate: \\
For any $s \geq 4$,
\begin{eqnarray}
\begin{aligned}
\|I(t)\|_{L^\infty(\mathbb{T}\times\mathbb{R}^+)}\leq \max \left\{           \|I(0)\|_{L^\infty(\mathbb{T}\times\mathbb{R}^+)},6C^2W(t)^2
\right\} e^{C\left(1+G(t)\right)t},
\label{3.31}
\end{aligned}
\end{eqnarray}
and for any $s\geq 6$,
\begin{eqnarray}
\begin{aligned}
\|I(t)\|_{L^\infty(\mathbb{T}\times\mathbb{R}^+)}\leq  \left\{           \|I(0)\|_{L^\infty(\mathbb{T}\times\mathbb{R}^+)}+C(1+W(t))W(t)^2t
\right\} e^{C\left(1+G(t)\right)t}.
\label{3.32}
\end{aligned}
\end{eqnarray}
In addition, if $s \geq 4$, we also have,
\begin{eqnarray}
\begin{aligned}
\mathop{\min}\limits_{\mathbb{T}\times\mathbb{R}^+}(1+y)^{\sigma}w(t)\geq \max \left\{         1-C\left(1+G(t)\right)te^{C\left(1+G(t)\right)t}
\right\}\cdot\min \left\{\mathop{\min}\limits_{\mathbb{T}\times\mathbb{R}^+}(1+y)^{\sigma}w_0-CW(t)t
\right\},
\label{3.33}
\end{aligned}
\end{eqnarray}
where positive constant $C$ depends on $s, \gamma,\sigma$, and $\delta$ only. The functions W and $G : [0,T] \rightarrow \mathbb{R}^+$ are respectively defined by
\begin{eqnarray}
G(t):=\mathop{\sup}\limits_{ [0,t]}\|w(\tau)\|_{H^{s,\gamma}_{g}}+\mathop{\sup}\limits_{ [0,t]}\|\partial_{x}^{s}U(\tau)\|_{{L^2}(\mathbb{T})} \quad and \quad W(t):=\mathop{\sup}\limits_{ [0,t]}\|w(\tau)\|_{H^{s,\gamma}_{g}}.
\label{2.012}
\end{eqnarray}
\end{Lemma}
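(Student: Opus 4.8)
The plan is to derive evolution (transport--diffusion) equations for the quantities $B_\alpha := (1+y)^{\gamma+\alpha_2}D^\alpha w$ with $|\alpha|\le 2$ and for $B_{(0,0)}=(1+y)^\sigma w$, and then apply the classical maximum principle. First I would apply $D^\alpha$ to the vorticity equation $(\ref{2.0002})_1$, multiply by the weight $(1+y)^{\gamma+\alpha_2}$, and commute the weight through the operator $\partial_t + u\partial_x + v\partial_y + 1 - \epsilon^2\partial_x^2 - \partial_y^2$. The commutators produce lower-order terms: from $v\partial_y$ a term $(\gamma+\alpha_2)\frac{v}{1+y}B_\alpha$, from $-\partial_y^2$ terms of the form $\frac{1}{1+y}\partial_y B_\alpha$ and $\frac{1}{(1+y)^2}B_\alpha$ (all with bounded coefficients on $\mathbb{R}_+$), plus the Leibniz commutator terms $\sum_{0<\beta\le\alpha}\binom{\alpha}{\beta}(D^\beta u\,\partial_x D^{\alpha-\beta}w + D^\beta v\,\partial_y D^{\alpha-\beta}w)$ rescaled by the weight. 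Since $|\alpha|\le 2$, every factor appearing in these source terms is controlled in $L^\infty$ either by $I(t)$ itself or by $\|w\|_{H^{s,\gamma}_g}$ and $\|\partial_x^s U\|_{L^2}$ via Lemma \ref{y4.4} (recall $s\ge 4$, so $s-2\ge 2$ and the needed $L^\infty$ bounds on $\partial_x^k u$, $\frac{\partial_x^k v}{1+y}$, and on weighted second-order derivatives of $w$ are available). This yields, after summing over $|\alpha|\le 2$, a differential inequality of the schematic form
\begin{eqnarray*}
(\partial_t + u\partial_x + v\partial_y - \epsilon^2\partial_x^2 - \partial_y^2)\,\Phi \le C\bigl(1+G(t)\bigr)\Phi + C\bigl(1+W(t)\bigr)W(t)^2,
\end{eqnarray*}
where $\Phi$ is built from $\sum_{|\alpha|\le 2}B_\alpha^2$ (the square is used so that the second-order elliptic terms give a good sign after applying $-\partial_y^2$ and $-\partial_x^2$, leaving the crossed first-derivative square with a favorable sign); the precise constant in front of the quadratic source is what distinguishes the $s\ge 4$ bound (\ref{3.31}) from the sharper $s\ge 6$ bound (\ref{3.32}), since for $s\ge 6$ one has extra room in Lemma \ref{y4.4}(v)--(vi) to absorb one more derivative without picking up an undifferentiated $\|w\|_{H^{s,\gamma}_g}$ factor.

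Next I would invoke the maximum principle for the parabolic operator $\partial_t + u\partial_x + v\partial_y - \epsilon^2\partial_x^2 - \partial_y^2$ on $\mathbb{T}\times\mathbb{R}_+$: one checks the boundary contribution at $y=0$ using the reduced boundary data from Lemma \ref{y2.1} (for $|\alpha|\le 2$ only the first identity $\partial_y w|_{y=0}=\partial_x P - U$ and its low-order $x$-derivatives are needed, which are controlled by the outer-flow assumption (\ref{1.8})), and decay as $y\to+\infty$ is guaranteed by the weighted norms. Applying the comparison principle to the differential inequality above against the ODE $\frac{d}{dt}m = C(1+G)m + C(1+W)W^2$ with $m(0)=\|I(0)\|_{L^\infty}$ produces, via Gronwall, exactly the exponential bounds (\ref{3.31}) and (\ref{3.32}); in the $s\ge 4$ case one instead takes the maximum of $\|I(0)\|_{L^\infty}$ and a constant times $W(t)^2$ as the effective data, reflecting that the source term cannot be integrated away as cheaply.

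For the lower bound (\ref{3.33}) on $B_{(0,0)}=(1+y)^\sigma w$, I would write the evolution equation for $(1+y)^\sigma w$ by commuting the weight $(1+y)^\sigma$ through the vorticity equation (no $D^\alpha$ this time), obtaining
\begin{eqnarray*}
(\partial_t + u\partial_x + v\partial_y - \epsilon^2\partial_x^2 - \partial_y^2)\bigl((1+y)^\sigma w\bigr) = -\Bigl(1 + \sigma\tfrac{v}{1+y} + \tfrac{\sigma(\sigma+1)}{(1+y)^2}\Bigr)(1+y)^\sigma w + 2\sigma\tfrac{1}{1+y}\partial_y\bigl((1+y)^\sigma w\bigr) + (\text{good terms}),
\end{eqnarray*}
so that the zeroth-order coefficient is bounded by $C(1+G(t))$ and the remaining inhomogeneity is $O(W(t))$. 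The Oleinik monotonicity $w>0$ and the initial lower bound $(1+y)^\sigma w_0\ge 2\delta$ let me run the minimum principle: at an interior minimum in space the elliptic terms are nonnegative, the boundary at $y=0$ is handled as above (using that $(1+y)^\sigma w|_{y=0}=0$ is not an issue because we track $\min$ over the closed half-space and the boundary condition for $w$ at $y=0$ is the Neumann-type $\partial_y w|_{y=0}$, not a Dirichlet one — one must be a bit careful here and use the lower bound propagated from the interior), and Gronwall in the minimum gives the product-form estimate in (\ref{3.33}). The main obstacle I expect is the careful bookkeeping of the boundary term at $y=0$: unlike the energy estimates of Lemmas \ref{y2.1}--\ref{y2.2}, here one cannot integrate by parts, so one must verify directly that the boundary data for $D^\alpha w$ with $|\alpha|\le 2$ (in particular $\partial_y w|_{y=0}$ and $\partial_y^3 w|_{y=0}$ from Lemma \ref{y2.1}) do not violate the maximum principle — i.e., that they are either controlled pointwise by $G(t)$ and $W(t)$ through (\ref{1.8}), or enter with the correct sign. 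A secondary technical point is ensuring the $\epsilon^2\partial_x^2$ regularization term is genuinely harmless in the maximum principle (it is, since it is a nondegenerate elliptic term with the right sign), so that all constants are uniform in $\epsilon$.
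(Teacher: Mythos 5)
Your overall skeleton (write the evolution equation for $B_\alpha=(1+y)^{\gamma+\alpha_2}D^\alpha w$, $|\alpha|\le 2$, sum the squares, and run the parabolic maximum/minimum principle with constants uniform in $\epsilon$) is the same as the paper's, but the proposal has a genuine gap at the single place where the real work of this lemma lies: the Dirichlet trace of $I$ and of $w$ at $y=0$. The maximum principle on the half-space $\mathbb{T}\times\mathbb{R}_+$ controls $\|I(t)\|_{L^\infty}$ only in terms of the initial data \emph{and} $\max_{\tau\in[0,t]}\|I(\tau)|_{y=0}\|_{L^\infty(\mathbb{T})}$; this boundary trace is not prescribed data. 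Lemma \ref{y2.1} only gives the odd $y$-derivatives $\partial_y^{2k+1}w|_{y=0}$ (Neumann-type information), so saying the boundary contribution is "controlled through (\ref{1.8}) or enters with the correct sign" does not close the argument. In the paper the two stated bounds come precisely from two different estimates of this trace: for $s\ge 4$ one bounds $\|I(\tau)|_{y=0}\|_{L^\infty(\mathbb{T})}\le 6C^2\|w\|^2_{H^{s,\gamma}_g}$ by the anisotropic Sobolev/trace inequality of Lemma \ref{y4.3}, which produces the term $6C^2W(t)^2$ inside the max in (\ref{3.31}); for $s\ge 6$ one instead computes $\partial_t D^\alpha w|_{y=0}$ from the vorticity equation (using $u=v=0$ on $y=0$, so the transport terms drop and only terms like $\partial_x^{\alpha_1}\partial_y^{\alpha_2+2}w$, $w\partial_x\partial_y^{\alpha_2-1}w$, etc.\ survive), bounds it by $C(1+W)W^2$ via Lemma \ref{y4.4} (this is exactly where $s\ge 6$ is needed, to have $L^\infty$ control of up to order-$4$ derivatives of $w$), and integrates in time to get $\|I(t)|_{y=0}\|_{L^\infty}\le\|I(0)|_{y=0}\|_{L^\infty}+C(1+W)W^2t$, which yields (\ref{3.32}). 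Your proposal instead attributes the $s\ge4$ versus $s\ge6$ dichotomy to the size of an \emph{interior} source term in the differential inequality; in the paper the interior inequality is homogeneous, $(\partial_t+u\partial_x+v\partial_y-\epsilon^2\partial_x^2-\partial_y^2+1)I\le C(1+G)I$, and all the inhomogeneous terms in (\ref{3.31})--(\ref{3.32}) originate from the boundary trace. As written, your argument would not produce the specific constants $6C^2W^2$ and $C(1+W)W^2t$, nor explain why $s\ge 6$ is required for the second bound.

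The same gap affects the lower bound (\ref{3.33}). The minimum principle for $B_{(0,0)}=(1+y)^\sigma w$ gives a lower bound in terms of $\min\{\min(1+y)^\sigma w_0,\ \min_{[0,t]\times\mathbb{T}}w|_{y=0}\}$, and the essential step — which your proposal leaves as "use the lower bound propagated from the interior" — is the boundary estimate: on $y=0$ one has $\partial_t w|_{y=0}=(\partial_y^2 w-w+\epsilon^2\partial_x^2 w)|_{y=0}$, hence $\|\partial_t w|_{y=0}\|_{L^\infty(\mathbb{T})}\le CW(t)$ and, after integrating in time, $\min_{\mathbb{T}}w(t)|_{y=0}\ge\min_{\mathbb{T}}w_0|_{y=0}-CW(t)t$; this is exactly the $-CW(t)t$ term appearing in (\ref{3.33}). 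Without these explicit boundary-trace computations (trace inequality for $s\ge4$; time-differentiation of the restricted quantities at $y=0$ for $s\ge6$ and for the lower bound), the proof is not complete.
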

\begin{proof}
For simplicity, we denote,
\begin{eqnarray*}
\begin{aligned}
I:=\sum \limits_{|\alpha|\leq 2} |(1+y)^{\gamma+\alpha_{2}}D^{\alpha}w|_{L^{2}}
\end{aligned}
\end{eqnarray*}
and
\begin{eqnarray*}
\begin{aligned}
B_{\alpha}:=(1+y)^{\gamma+\alpha_{2}}D^{\alpha}w,
\end{aligned}
\end{eqnarray*}
then $B_{(0,0)}=(1+y)^{\gamma}w$.
By a direct computation, we obtain
\begin{eqnarray}
\begin{aligned}
\left(\partial_t+u\partial_x+v\partial_y-\partial^2_y-\epsilon^2\partial^2_x+1\right)B_{\alpha}=\sum\limits_{i=1}^{3} S_{i},
\label{3.34}
\end{aligned}
\end{eqnarray}
where
\begin{eqnarray*}
\begin{aligned}
S_1=\left(\frac{\sigma+\alpha_2}{1+y}v+\frac{(\sigma+\alpha_2)(\sigma+\alpha_2-1)}{(1+y)^2}\right)B_{\alpha},\ \ S_2=-\frac{2(\sigma+\alpha_2)}{1+y}\partial_y B_{\alpha},
\end{aligned}
\end{eqnarray*}
and
\begin{eqnarray*}
\begin{aligned}
S_3=-\sum\limits_{0\leq \beta < \alpha}\binom{\alpha}{\beta}\left\{(1+y)^{\beta_2}(D^{\beta}u B_{\alpha-\beta+e_1}+\frac{D^{\beta} v B_{\alpha-\beta+e_2}}{1+y})\right\}.
\end{aligned}
\end{eqnarray*}
Multiplying the equation (\ref{3.34}) by $2B_{\alpha}$, we arrive at
\begin{eqnarray}
&&\left(\partial_t+u\partial_x+v\partial_y-\epsilon^2\partial^2_x-\partial^2_y+1\right)I\nonumber\\
&&=2\sum \limits_{|\alpha|\leq 2} \left(|S_1 B_{\alpha}|+|S_2 B_{\alpha}|+|S_3 B_{\alpha}|-\epsilon^2|\partial_x B_{\alpha}|^2-|\partial_y B_{\alpha}|^2\right)\nonumber\\
&&\leq  C_{s, \gamma,\sigma,\delta  }( 1+\|w(s)\|_{H^{s,\gamma}_{g}}+\|\partial_{x}^{s}U\|_{{L^2}(\mathbb{T})})I+C_{\delta}I+\sum \limits_{|\alpha|\leq 2}|\partial_y B_{\alpha}|^2-2\epsilon^2\sum \limits_{|\alpha|\leq 2}|\partial_x B_{\alpha}|^2\nonumber\\
&&\quad -2\sum \limits_{|\alpha|\leq 2}|\partial_y B_{\alpha}|^2+2\|(1+y)^{\beta_2}(D^{\beta}u +\frac{D^{\beta} v }{1+y})\|_{L^{\infty}}\sum \limits_{|\alpha|\leq 2}\left(\sum\limits_{0\leq \beta < \alpha}|B_{\alpha-\beta+e_1}+B_{\alpha-\beta+e_2}|\right)B_{\alpha}\nonumber\\
&&\leq  C_{s, \gamma,\sigma,\delta  }( 1+\|w(s)\|_{H^{s,\gamma}_{g}}+\|\partial_{x}^{s}U\|_{{L^2}(\mathbb{T})})I,
\end{eqnarray}
where we have used Lemma \ref{y4.4} and Young inequality.
Applying the classical maximum principle for parabolic equations, we have
\begin{align}
&\|I(t)\|_{L^\infty(T\times\mathbb{R}^+)}\nonumber\\
&\leq \max \left\{            e^{C\left(1+G(t)\right)t}\|I(0)\|_{L^\infty(T\times\mathbb{R}^+)},\mathop{\max}\limits_{\tau \in [0,t]}\left(e^{C\left(1+G(t)\right)(t-\tau)}\|I(\tau)|_{y=0}\|_{L^\infty(\mathbb{T})}\right)
\right\}.
\label{3.36}
\end{align}
 To derive an estimate  of lower bound on $B_{(0,0)}$, we arrive at
 \begin{eqnarray}
\begin{aligned}
\left(\partial_t+u\partial_x+(v+\frac{2\sigma}{1+y})\partial_y-\partial^2_y-\epsilon^2\partial^2_x+1\right)B_{(0,0)}=\left(\frac{\sigma}{1+y}v+\frac{\sigma(\sigma-1)}{(1+y)^2}\right)B_{(0,0)}.
\label{3.37}
\end{aligned}
\end{eqnarray}
Applying the classical maximum principle for (\ref{3.37}), we get
\begin{align}
&\mathop{\min}\limits_{\mathbb{T}\times\mathbb{R}^+}(1+y)^{\sigma}w(t)\nonumber\\
&\geq \max \left\{         1-C\left(1+G(t)\right)te^{C\left(1+G(t)\right)t}
\right\}\times\min \left\{\mathop{\min}\limits_{\mathbb{T}\times\mathbb{R}^+}(1+y)^{\sigma}w_0,        \mathop{\min}\limits_{[0,t]\times\mathbb{T}}w|_{y=0}
\right\}.
\label{3.38}
\end{align}
Next we start estimating the boundary values. Using  Lemma \ref{y4.3}, we obtain
\begin{align}
\|I(\tau)|_{y=0}\|_{L^\infty(\mathbb{T})}&\leq 3C^2\sum \limits_{|\alpha|\leq 2}\left(\|D^{\alpha}w\|_{L^2}^2+\|\partial_xD^{\alpha}w\|_{L^2}^2+\|\partial_y^2 D^{\alpha}w\|_{L^2}^2\right)\nonumber\\
&\leq 6C^2\|w\|_{H^{s,\gamma}_{g}}^{2},
\end{align}
which, together with (\ref{3.36}), gives (\ref{3.31}).

According to (\ref{1.5}) and boundary condition $\partial^{\alpha_1}_x u=\partial^{\alpha_1}_x v=0$, we have
\begin{eqnarray}
\begin{aligned}
\partial_t D^{\alpha}w|_{y=0}=D^{\alpha}(\epsilon^2\partial_x^2+\partial_y^2 w-w-u\partial_x w-v\partial_y w)\big{|}_{y=0}.
\end{aligned}
\end{eqnarray}
When $\alpha_2=0$ and $\alpha_1 \leq 2$,
\begin{eqnarray}
\begin{aligned}
\partial_t D^{\alpha}w\big{|}_{y=0}=(\epsilon^2\partial^{\alpha_1+2}_{x} w+\partial^{\alpha_1}_{x}\partial_y^2 w-\partial^{\alpha_1}_{x}w)\big{|}_{y=0}.
\end{aligned}
\end{eqnarray}
When $1 \leq \alpha_2 \leq 2$ and $\alpha_1=0$,
\begin{align}
\partial_t D^{\alpha}w\big{|}_{y=0}
&=\left(\epsilon^2\partial^{\alpha_2}_{y}\partial^{2}_{x} w+\partial^{\alpha_2} _{y}\partial _{y}^{2}w-\partial^{\alpha_2} _{y}w-\partial^{\alpha_2} _{y}(u\partial_x w+v\partial_y w)\right)\big{|}_{y=0}\nonumber\\
&=\left(\epsilon^2\partial^{\alpha_2}_{y}\partial^{2}_{x} w+\partial^{\alpha_2} _{y}\partial _{y}^{2}w-\partial^{\alpha_2} _{y}w-\alpha_2 w \partial_x \partial^{\alpha_2-1} _{y}w\right)\big{|}_{y=0}.
\end{align}
When $\alpha_2=1$ and $\alpha_1 =1$,
\begin{eqnarray}
\begin{aligned}
\partial_t D^{\alpha}w\big{|}_{y=0}=(\epsilon^2\partial_{y}\partial^{3}_{x} w+\partial _{x}\partial _{y}^{3}w-\partial _{x}\partial _{y}w-\partial _{x}w\partial _{x}w-w\partial _{x}^2w)|_{y=0}.
\end{aligned}
\end{eqnarray}
For $s\geq 6$, using Lemma \ref{y4.4}, we get
\begin{align}
\|\partial_t I|_{y=0}\|_{{L^\infty}(\mathbb{T})}&\leq C_{s, \gamma  } \|D^{\alpha}w\partial_t D^{\alpha}w|_{y=0}\|_{{L^\infty}(\mathbb{T})}\nonumber\\
&\leq  C_{s, \gamma  }\|w(\tau)\|_{H^{s,\gamma}_{g}}(\|w(\tau)\|_{H^{s,\gamma}_{g}}+\|w(\tau)\|_{H^{s,\gamma}_{g}}^2).
\end{align}
Hence, a direct integration yields
 \begin{eqnarray}
\begin{aligned}
\|I(t)|_{y=0}\|_{{L^\infty}(\mathbb{T})}\leq  \|I(0)|_{y=0}\|_{{L^\infty}(\mathbb{T})}+C_{s, \gamma  } (1+W(t))W(t)^2t,
\end{aligned}
\end{eqnarray}
which, together with (\ref{3.36}), gives (\ref{3.32}). For $s\geq4$,
 \begin{eqnarray}
\begin{aligned}
\|\partial_t w|_{y=0}\|_{{L^\infty}(\mathbb{T})}&=\|(\partial_y^2 w-w)|_{y=0}\|_{{L^\infty}(\mathbb{T})}\\
&\leq  CW(t).
\end{aligned}
\end{eqnarray}
Thus, a direct estimate implies
 \begin{eqnarray}
\begin{aligned}
\mathop{\min}\limits_{\mathbb{T}}{w(t)}|_{y=0}\geq \mathop{\min}\limits_{\mathbb{T}}{w_0}|_{y=0}-CW(t)t,
\end{aligned}
\end{eqnarray}
which, along with (\ref{3.38}), gives (\ref{3.33}). The proof of Lemma \ref{y2.3} is thus complete.
\end{proof}

\section{Local-in-time existence and uniqueness of solutions}
\subsection{Local-in-time existence of solutions }
In this subsection, we go back to use the symbol $(u^\epsilon, v^\epsilon, w^\epsilon)$ instead of $(u, v, w)$ from Subsections \ref{su.1}-
\ref{su.4} to denote the solution to the regularized system (\ref{2.0001}). To obtain the local-in-time solution of the initial-boundary value problem (\ref{1.1})-(\ref{1.2}), we will construct the solution to the Prandtl equations (\ref{1.1}) by passing to the limit $\epsilon \rightarrow 0^+$ in the regularized Prandtl equations (\ref{2.0001}). We only sketch the proof into five steps and more details can be found in \cite{[2]}.\\
$\textbf{Step 1.}$ According to the definition of $F$, assumption (\ref{1.8}), and the regularized Bernoulli's law,
\begin{eqnarray}
\|F\|_{L^\infty} \leq M < +\infty.
\end{eqnarray}
Thus we derive the uniform estimate for any $\epsilon \in [0,1]$ and any $t \in [0,T_1]$
\begin{eqnarray}
\|w^\epsilon\|_{H^{s,\gamma}_{g}} \leq 4 \|w_0^\epsilon\|_{H^{s,\gamma}_{g}},
\label{5.5}
\end{eqnarray}
provided that $T_1$ is chosen by (\ref{2.011}) such that
\begin{eqnarray*}
T_1 :=\min \left\{\frac{3\|w_0\|_{H^{s,\gamma}_{g}}^2}{C_{s, \gamma,\sigma,\delta  }M},\frac{1-2^{2-s}}{2^{s-2}C_{s, \gamma,\sigma,\delta  }\|w_0\|_{H^{s,\gamma}_{g}}^{s-2}}\right\}.
\end{eqnarray*}
$\textbf{Step 2.}$ When $s \geq 6$, we know from definition (\ref{2.012}) of $\Omega$ and $G$ that for any $t \in [0,T_1]$,
\begin{eqnarray}
\Omega(t) \leq 4 \|w_0^\epsilon\|_{H^{s,\gamma}_{g}} \quad and \quad G(t) \leq 4 \|w_0^\epsilon\|_{H^{s,\gamma}_{g}}+M.
\label{2.015}
\end{eqnarray}
Thus, if we choose
\begin{eqnarray*}
T_2 :=\min \left\{T_1,\frac{1}{64\delta^2 C_{s, \gamma }(1+4 \|w_0\|_{H^{s,\gamma}_{g}}) \|w_0\|_{H^{s,\gamma}_{g}}^2},\frac{\ln 2}{C_{s, \gamma,\sigma,\delta  }(1+4 \|w_0\|_{H^{s,\gamma}_{g}}+M)}\right\},
\end{eqnarray*}
then using inequality (\ref{3.32}) and the assumption on initial data,
\begin{eqnarray*}
\sum \limits_{|\alpha|\leq 2}  |(1+y)^{\sigma+\alpha_{2}}D^{\alpha}w | ^{2}\leq\frac{1}{4\delta^{2}},
\end{eqnarray*}
we have the upper bound
\begin{eqnarray}
\|\sum \limits_{|\alpha|\leq 2}  |(1+y)^{\sigma+\alpha_{2}}D^{\alpha}w^\epsilon (t) | ^{2}\|_{L^{\infty}(\mathbb{T}\times \mathbb{R}^+)}\leq\frac{1}{\delta^{2}}
\label{2.019}
\end{eqnarray}
for all $t \in [0,T_2]$. When $s \geq 4$, from the hypothesis of the initial data $\|\omega_0\| \leq C\delta^{-1}$, we have the same estimate (\ref{2.019}) for all $t\in [0,T_2]$.\\
$\textbf{Step 3.}$
Let us choose
\begin{eqnarray*}
T_3 :=\min \left\{T_1,\frac{\delta}{8 C_{s, \gamma }\|w_0\|_{H^{s,\gamma}_{g}}},\frac{1}{6C_{s, \gamma,\sigma,\delta  }N},\frac{\ln 2}{C_{s, \gamma,\sigma,\delta  }N}\right\},
\end{eqnarray*}
where $N :=1+4\|w_0\|_{H^{s,\gamma}_{g}}+M$. Then using (\ref{2.012}) and (\ref{2.015}), we derive the uniform estimate for any $\epsilon \in [0,1]$ and any $t \in [0,T_3]$,
\begin{eqnarray}
\min \limits_{\mathbb{T}\times \mathbb{R}^+}(1+y)^{\sigma} w^\epsilon(t) \geq \delta.
\label{5.6}
\end{eqnarray}
$\textbf{Step 4.}$
In summary, the above uniform estimates (\ref{5.5}),  (\ref{2.019})-(\ref{5.6}) hold for any $t \in [0,T]$, if $T$ is chosen to satisfy $T=\min\{T_1,T_2,T_3\}$. Further using almost equivalence relation  (\ref{4.1}), we have
\begin{eqnarray}
\sup  \limits_{0 \leq t \leq T}( \|w^\epsilon\|_{H^{s,\gamma}}+\|u^\epsilon-U\|_{H^{s,\gamma-1}}) \leq C\left( 4\|w_0\|_{H^{s,\gamma}_{g}}+\sup \limits_{0 \leq t \leq T}\|\partial_x^s U\|_{L^2}\right) < + \infty.
\label{2.016}
\end{eqnarray}
From the equation (\ref{2.0001}), system (\ref{2.0002}), (\ref{2.016}) and Lemma \ref{y4.3}, we also have $\partial_t w^{\epsilon}$	 and $\partial_t (u^{\epsilon}-U)$ are uniformly
bounded in $L^{\infty}([0,T];H^{s-2,\gamma})$ and $L^{\infty}([0,T];H^{s-2,\gamma-1})$ respectively. By Lions-Aubin lemma and the compact embedding of $H^{s,\gamma}$ in $H^{s^{\prime}}_{loc}$
, we conclude after taking a subsequence, as $\epsilon_k \rightarrow 0^+$,
\begin{equation}\left\{
\begin{array}{ll}
\omega^{\epsilon_k} \stackrel{*}{\rightharpoonup} \omega, &{\rm in} \quad L^{\infty}([0,T];H^{s,\gamma}),\\
\omega^{\epsilon_k} \rightarrow \omega, &{\rm in} \quad C([0,T];H^{s^{\prime}}_{loc}),\\
u^{\epsilon_k}-U \stackrel{*}{\rightharpoonup} u-U,  &{\rm in} \quad L^{\infty}([0,T];H^{s,\gamma-1}),\\
u^{\epsilon_k} \rightarrow u, &{\rm in} \quad C([0,T];H^{s^{\prime}}_{loc}),
\end{array}
         \right.\end{equation}
for all $s^{\prime} < s$, where
\begin{equation}\left\{
\begin{array}{ll}
w=\partial_y u \in L^{\infty} ([0,T];H^{s,\gamma})\cap \bigcap_{s^\prime <s}C([0,T];H^{s^{\prime}}_{loc}),\\
 u-U \in L^{\infty} ([0,T];H^{s,\gamma-1})\cap \bigcap_{s^\prime <s}C([0,T];H^{s^{\prime}}_{loc}).
\end{array}
         \right.\end{equation}
$\textbf{Step 5.}$
Using the local uniform
convergence of $\partial_x u^{\epsilon_k}$, we also have the pointwise convergence of $v^{\epsilon_k}:$ as $\epsilon \rightarrow 0^+$,
\begin{eqnarray}
v^{\epsilon_k}=-\int^y_0 \partial_x u ^{\epsilon_k}~ dy \rightarrow -\int^y_0 \partial_x u ~dy=: v.
\end{eqnarray}
Thus, passing the limit $\epsilon_k \rightarrow 0^+$ in the initial-boundary value problem (\ref{2.0001}), we get that the limit
$(u, v)$ solves the initial-boundary value problem (\ref{1.4}), (\ref{1.2}) in the classical sense. Furthermore, we obtain
$(u, v, b)$ solves the initial-boundary value problem (\ref{1.1})-(\ref{1.2}) in the classical sense. This completes the proof
of the existence.
\subsection{Uniqueness of Solutions}
In this subsection, we are going to prove the uniqueness of solutions to 2D magnetic Prandtl model.  Let us denote $(\bar u -\bar v)=(u_1,v_1)-(u_2,v_2)$, $\bar w =w_1-w_2$, $\bar b_1 =b_{11}-b_{12}$ and $a_2=\frac{\partial_y w_2}{w_2}$. It is easy to check that $\bar g =\bar w -a_2 \bar u=w_2 \partial_y(\frac{\bar u}{w_2})$ and the evolution equation on $\bar g$ is as follows
\begin{align}
(\partial_t+u_1\partial_x+v_1\partial_y-\partial^2_y+1)\bar g
&=(\partial_t+u_1\partial_x+v_1\partial_y-\partial^2_y+1)\bar w-a_2(\partial_t+u_1\partial_x+v_1\partial_y-\partial^2_y+1)\bar u\nonumber\\
&\quad -\bar u(\partial_t+u_1\partial_x+v_1\partial_y-\partial^2_y+1)a_2-2\partial_y a_2 \bar w.
 \label{33.40}
\end{align}
Next, we calculate the values of the first three terms on the right-hand side of equation (\ref{33.40}) respectively. Recalling the vorticity system (\ref{1.5}), we have
\begin{eqnarray}
\begin{aligned}
(\partial_t+u_i\partial_x+v_i\partial_y+1)\partial_y w_i=\partial_y^3 w_i+\partial_x u_i \partial_y w_i -w_i \partial_x w_i.
\end{aligned}
\end{eqnarray}
Then, according to the definition of $a_i$, we get
\begin{align}
(\partial_t+u_i\partial_x+v_i\partial_y+1)a_i&=\frac{(\partial_t+u_i\partial_x+v_i\partial_y+1)\partial_y w_i}{w_i}-\frac{\partial_y w_i(\partial_t+u_i\partial_x+v_i\partial_y+1) w_i}{w_i^2}\nonumber\\
&=\frac{\partial^3_y w_i}{w_i}+a_i\partial_x u_i-\partial_x w_i -a_i \frac{\partial_y^2 a_i}{w_i},
\end{align}
which, combined with the fact
\begin{eqnarray*}
\frac{\partial^3_y w_i}{w_i}-a_i \frac{\partial_y^2 a_i}{w_i}=\partial_y^2 a_i+2a_i \partial_y a_i,
\end{eqnarray*}
imolies
\begin{eqnarray}
\begin{aligned}
(\partial_t+u_i\partial_x+v_i\partial_y-\partial_y^2+1)a_i=a_i\partial_x u_i-\partial_x w_i +2a_i \partial_y a_i.
\end{aligned}
\end{eqnarray}
Furthermore, we conclude that
\begin{eqnarray}
\begin{aligned}
(\partial_t+u_1\partial_x+v_1\partial_y-\partial_y^2+1)a_2
=a_2\partial_x u_2-\partial_x w_2 +2a_2 \partial_y a_2+(\bar u \partial_x+\bar v \partial_y)a_2.
 \label{33.41}
\end{aligned}
\end{eqnarray}
For the estimate of $(\partial_t+u_1\partial_x+v_1\partial_y-\partial^2_y+1)\bar u$ and $(\partial_t+u_1\partial_x+v_1\partial_y-\partial^2_y+1)\bar w$, from the equation (\ref{1.4}) and (\ref{1.5}), we can derive that
\begin{eqnarray}
\begin{aligned}
(\partial_t+u_1\partial_x+v_1\partial_y-\partial^2_y+1)\bar u=-\bar u \partial_x u_2-\bar v \partial_y  u_2,
\end{aligned}
\end{eqnarray}
and
\begin{eqnarray}
\begin{aligned}
(\partial_t+u_1\partial_x+v_1\partial_y-\partial^2_y+1)\bar w=-\bar u \partial_x w_2-\bar v \partial_y  w_2.
 \label{33.42}
\end{aligned}
\end{eqnarray}
Using (\ref{33.40}) and (\ref{33.41})-(\ref{33.42}), we have
\begin{eqnarray}
\begin{aligned}
(\partial_t+u_1\partial_x+v_1\partial_y-\partial^2_y+1)\bar g=-2\bar w \partial_y a_2 -\bar u(\bar u \partial_x a_2+\bar v \partial_y a_2+2a \partial_y a_2).
\end{aligned}
\end{eqnarray}
Now we derive $L^2$ estimate on $\bar g$. For any $t \in (0,T]$, multiplying by $2 \bar g$ and then integrating by parts over $\mathbb{T}\times\mathbb{R}_{+}$, we obtain
\begin{align}
&\frac{d}{dt}\|\bar g\|_{L^2}^2+2\|\bar g\|_{L^2}^2+2\|\partial_y \bar g\|_{L^2}^2\nonumber\\
&=\int_{\mathbb{T}}\bar g \partial_y \bar g |_{y=0}dx-2\iint \bar g \bar u(\bar u \partial_x a_2+\bar v \partial_y a_2+2a \partial_y a_2)-4\iint \bar g \bar w \partial_y a_2 \nonumber \\
&\quad -2\iint \bar g(u_1\partial_x \bar g+v_1\partial_y \bar g).
 \label{33.43}
\end{align}
We need to estimate the integral equation (\ref{33.43}) term by term, applying the simple trace theorem and Young's inequality,
\begin{align}
|\int_{\mathbb{T}}\bar g \partial_y \bar g |_{y=0}dx|
&\leq|\iint a_2 |\bar g|^2  dxdy|+|\iint \partial_y a_2 |\bar g|^2  dxdy|+2|\iint  a_2 \bar g \partial_y \bar g  dxdy|\nonumber\\
&\leq \frac{1}{2}\|\partial_y \bar g\|_{L^2}^2+C_{\sigma,\delta  }\|\bar g\|_{L^2}^2,
\end{align}
where we used the fact $ \partial_y \bar g |_{y=0}=-a_2 \bar g  |_{y=0}$ ($\partial_y \bar w=0$). We claim $\|\frac{\bar u}{1+y}\|_{L^2} \leq C_{\sigma,\delta} \|\bar g\|_{L^2}$, so by Lemma \ref{y4.4},
\begin{align}
-2\iint \bar g \bar u(\bar u \partial_x a_2+\bar v \partial_y a_2+2a \partial_y a_2) &\leq  2\|(1+y)(\bar u \partial_x a_2+\bar v \partial_y a_2+2a \partial_y a_2)\|_{L^\infty}\|\frac{\bar u}{1+y}\|_{L^2}\|\bar g \|_{L^2}\nonumber\\
&\leq C_{ \gamma,\sigma,\delta  }(1+\|w_i\|_{H^{4,\gamma}_{g}}+\|\partial_{x}^{4}U\|_{{L^2}(\mathbb{T})})\|\bar g\|_{L^2}^2.
\end{align}
Below we give the fact that $\|\frac{\bar u}{1+y}\|_{L^2}$ can be controlled by $\|\bar g\|_{L^2}$, since $\delta \leq (1+y)^{\delta} w_2\leq \delta^{-1}$,
\begin{eqnarray}
\begin{aligned}
\|\frac{\bar u}{1+y}\|_{L^2} \leq \delta^{-1} \|(1+y)^{-\sigma-1}\frac{\bar u}{w^2}\|_{L^2} \leq C_{\sigma,\delta}\|(1+y)^{-\sigma} \partial_x(\frac{\bar u}{w_2})\|_{L_2} \leq C_{\sigma,\delta} \|\bar g\|_{L^2}.
\end{aligned}
\end{eqnarray}
In addition, we also have
\begin{eqnarray}
\begin{aligned}
\| \bar w\|_{L^2} \leq \|\bar g \|_{L^2}+\delta^{-2}\|\frac{\bar u}{1+y}\|_{L^2}\leq C_{\sigma,\delta} \|\bar g\|_{L^2},
\end{aligned}
\end{eqnarray}
and
\begin{eqnarray}
\begin{aligned}
-4\iint  \bar g \bar w \partial_y a_2 \leq C_{\sigma,\delta} \|\bar g\|_{L^2}^2.
\end{aligned}
\end{eqnarray}
For the last term in (\ref{33.43}), using the integration by parts, boundary condition $(u_1, v_1)\big{|}_{y=0}$ and $\partial_x u_1+\partial_y v_1=0$, we can  show
\begin{eqnarray}
\begin{aligned}
-2\iint \bar g(u_1\partial_x \bar g+v_1\partial_y \bar g)=0.
\label{5.2}
\end{aligned}
\end{eqnarray}
Combining all of the above estimates (\ref{33.43})-(\ref{5.2}), we conclude
\begin{eqnarray}
\begin{aligned}
\frac{d}{dt}\|\bar g\|_{L^2}^2 \leq C_{ \gamma,\sigma,\delta  }(1+\|w_i\|_{H^{4,\gamma}_{g}}+\|\partial_{x}^{4}U\|_{{L^2}(\mathbb{T})})\|\bar g\|_{L^2}^2.
\end{aligned}
\end{eqnarray}
 which, by Gronwall's inequality, gives
\begin{eqnarray}
\begin{aligned}
\|\bar g(t)\|_{L^2}^2 \leq \|\bar g(0)\|_{L^2}^2e^{Ct},
\end{aligned}
\end{eqnarray}
with $C=C_{ \gamma,\sigma,\delta  }(1+\|w_i\|_{H^{4,\gamma}_{g}}+\|\partial_{x}^{4}U\|_{{L^2}(\mathbb{T})})$, and this implies $\bar g=0$ due to $u_1\big{|}_{t=0}=u_2\big{|}_{t=0}$. Since $w_2\partial_y (\frac{u_1-u_2}{w_2})=\bar g =0$, we derive
\begin{eqnarray}
\begin{aligned}
u_1-u_2=q w_2
\end{aligned}
\end{eqnarray}
for some function $q=q(t,x)$. Now using the  Oleinik's monotonicity assumption $w_2 >0 $ and boundary condition $u_1\big{|}_{y=0}=u_2\big{|}_{y=0}=0$, we can get $q=0$, and hence $u_1=u_2$. Furthermore, by using $\partial_x u+\partial_y v=0$ and $\partial_y u+\partial_y^2 b_1=0$, then $v_i$  and $b_{1i}$ can be uniquely determined (i.e., $v_1=v_2$ and $b_{11}=b_{12}$).

{\bf Acknowledgement}
This paper was supported in part by the NNSF of China with contract  number 12171082, the fundamental research funds for the central universities with contract numbers 2232022G-13, 2232023G-13 and  the Scientific Research Foundation of Education Department of Yunnan province with  contract  number 2023J0133.

\newpage


\begin{thebibliography}{lllp}
\setlength{\itemsep}{- 2mm}
\bibitem{[5]} R. Alexander, Y. Wang, C. Xu and T. Yang, Well posedness of the Prandtl eqauation
in Sobolev spaces, J. Amer. Math. Soc., 28(3)(2015), 745-784.

\bibitem{[28]} R.E. Caflisch and M. Sammartino, Existence and singularities for the Prandtl boundary layer equations, Z. Angew.
Math. Mech.,  80(11-12)(2000), 733-744.

\bibitem{[36]} M. Cannone,  M.C. Lombardo and M. Sammartino,  Existence and uniqueness for the Prandtl equations, C. R. Acad. Sci. Paris, 332(3)(2001), 277-282.

\bibitem{[17]} D. Chen, Y. Wang and Z. Zhang, Well-posedness of the linearized Prandtl equation around
a non-monotonic shear flow, Ann. I. H. Poincar$\acute{e}$-AN, 35(2018), 1119-1142.

\bibitem{[18]} D. Chen, Y. Wang and Z. Zhang, Well-posedness of the Prandtl equation with monotonicity in Sobolev spaces, J. Differential Equations, 264(2018), 5870-5893.

\bibitem{[29]} W.F. Cope and D.R. Hartree, The laminar boundary layer in compressible flow, Philos. Trans. R. Soc. A., 241(827)(1951), 1-69.

\bibitem{[22]} W. E, Boundary layer theory and the zero-viscosity limit of the Navier-Stokes equation, Acta Math. Sin., 16(2000),
207-218.

\bibitem{[30]} W. E and  B.  Engquist, Blow up of solutions of the unsteady Prandtl's equation, Comm. Pure Appl. Math., 50(12)(1997), 1287-1293.

\bibitem{[21]} H. Dietert and D. G$\acute{e}$rard-Varet, Well-Posedness of the Prandtl equations without any structural assumption, Ann. Partial Differential Equations, (2019) 5:8,
https://doi.org/10.1007/s40818-019-0063-6.

\bibitem{[15]} J. Gao, D. Huang and Z. Yao, Boundary layer problems for the Two-dimensional inhomogeneous incompressible magnetohydrodynamics equations, Anal. Partial Differential Equations, (2018), arXiv:1810.11258v2.

\bibitem{[31]} F. Gargano, M. Sammartino and V. Sciacca, Singularity formation for Prandtl's equations, Phys. D.,  238(19)(2009),
1975-1991.

\bibitem{[32]} D. G$\acute{e}$rard-Varet and E. Dormy, On the ill-posedness of the Prandtl equation, J. Amer. Math. Soc., 23(2)(2010), 591-609.

\bibitem{[33]} D. G$\acute{e}$rard-Varet, Y. Maekawa and N. Masmoudi, Gevrey stability of Prandtl expansions for 2D Navier-Stokes, (2016), arXiv:
1607.06434v1.

\bibitem{[34]} D. G$\acute{e}$rard-Varet and N. Masmoudi, Well-posedness for the Prandtl system without analyticity or monotonicity, Ann.
Sci. $\acute{E}$c. Norm. Sup¨¦r., 48(6)(2015), 1273-1325.

\bibitem{[35]} D. G$\acute{e}$rard-Varet and T. Nguyen, Remarks on the ill-posedness of the Prandtl equation, Asymptot. Anal., 77(1-2)(2012),
71-88.

\bibitem{[4]} D. G$\acute{e}$rard-Varet and M. Prestipino, Formal derivation and stability analysis of boundary layer models in MHD, Z. Angew. Math. Phys., 68(3)(2017), Art., 76.


\bibitem{[7]} S. Gong, Y. Guo and Y. Wang, Boundary layer problems for the two-dimensional compressible Navier-Stokes equations,
Anal. Appl., 14(1)(2016),  1-37.

\bibitem{[23]} Y. Guo and T. Nguyen, A note on Prandtl boundary layers, Comm. Pure Appl. Math., 64(10)(2011), 1416-1438.

\bibitem{[11]} Y. Guo and T. Nguyen, Prandtl boundary layer expansions of steady Navier-Stokes flows over a moving plane, Ann. Partial Differential Equations, 3(10)(2017), DOI: 10.1007/s40818-016-0020-6.

\bibitem{[16]} Q. Hou, C. Liu, Y. Wang and Z. Wang, Stability of boundary layers for a viscous hyperbolic system arising from chemotaxis: one-dimensional case, SIAM J. Math. Anal., 50(3)(2018), 3058-3091.


\bibitem{[20]} Y. Huang, C. Liu and T. Yang,  Local-in-time well-posedness for compressible MHD boundary layer, J. Differential Equations, 266(2019), 2978-3013.

\bibitem{[6]} M. Ignatova and V. Vicol, Almost global existence for the Prandtl boundary layer equations,
Arch. Ration. Mech. Anal., 220(2)(2016), 809-848.

\bibitem{[24]} I. Kukavica, N. Masmoudi, V. Vicol and T.K. Wong, On the local well-posedness of the Prandtl and the hydrostatic
Euler equations with multiple monotonicity regions, SIAM J. Math. Anal., 46(2014), 3865-3890.

\bibitem{[8]} W. Li, D. Wu and C. Xu,  Gevrey class smoothing effect for the Prandtl equation, SIAM J. Math. Anal., 48(3)(2016), 1672-1726.

\bibitem{[12]} W. Li and  T. Yang, Well-posedness in Gevrey space for the Prandtl equations with non-degenerate critical points, J. Eur. Math. Soc.,  22(3)(2020), 717-775.

\bibitem{[13]} C. Liu, F. Xie and  T. Yang, MHD boundary layers in Sobolev spaces without monotonicity, I. Well-posedness theory, Analysis of PDEs, (2016), arXiv 1611.05815v4.

\bibitem{[14]} C. Liu, F. Xie and T. Yang, MHD boundary layers in Sobolev spaces without monotonicity, II. Convergence theory, Analysis of PDEs, (2018), arXiv:1704.00523v4.

\bibitem{[25]} M.C. Lombardo, M. Cannone and M. Sammartino, Well-posedness of the boundary layer equations, SIAM J. Math.
Anal., 35(4)(2003), 987-1004.


\bibitem{[2]} N. Masmoudi and T. K. Wong, Local-in-time existence and uniqueness of solutions to the Prandtl equations by energy methods, Comm. Pure Appl. Math.,
    68(10)(2015), 1683-1741.

\bibitem{[1]} M. Medve$\check{d}$,  A new approach to an analysis of Henry type integral inequalities and their Bihari type versions, J. Math. Anal. Appl.,
214(2)(1997), 349-366.

\bibitem{[26]} O.A. Oleinik and V.N. Samokhin, Mathematical Models in Boundary Layer Theory, Applied Mathematics and Mathematical Computation, vol. 15,
 Chapman and Hall/CRC, Boca Raton, FL, 1999.

\bibitem{[37]} Y. Qin, Integral and discrete inequalities and their applications, Vol. I. Nonlinear inequalities; Vol. II. Nonlinear inequalities,
  Springer International
Publishing AG, Birkhauser, 2016.

\bibitem{[38]} Y. Qin, Analytic inequalities and their applications in PDEs, Springer International
Publishing, 2017.
\bibitem{[39]}Y. Qin and X.Dong, Local well-posedness of solutions to 2D mixed Prandtl equations in Sobolev space without monotonicity and lower bound, submitted.
\bibitem{[3]} F. Xie and Y. Tong, Global-in-time stability of 2D MHD boundary Layer in the Prandtl-Hartmann regime, SIAM J. Math. Anal., 50(6)(2018), 5749-5760.

\bibitem{[19]} F. Xie and T. Yang, Lifespan of solutions to MHD boundary layer equations
with analytic perturbation of general shear flow, Acta Math. Appl. Sin-E., 35(1)(2019), 209-229.

\bibitem{[27]} Z. Xin and L. Zhang, On the global existence of solutions to the Prandtl's system, Adv. Math.,  181(2004), 88-133.

\bibitem{[10]} C. Xu and X. Zhang, Long time well-posedness of Prandtl equations in Sobolev space, J. Differential Equations, 263(2017), 8749-8803.
\bibitem{[9]} P. Zhang,  Z.  Zhang, Long time well-posedness of Prandtl system with small and analytic initial data, J. Funct. Anal., 270(7)(2016), 2591-2615.
\end{thebibliography}
\end{document}